\newtheorem{theorem}{Theorem}[section]
\newtheorem{lemma}[theorem]{Lemma}
\newtheorem{proposition}[theorem]{Proposition}
\newtheorem{corollary}[theorem]{Corollary}
\newcommand{\proba}{\mathbb{P}}
\newcommand{\EE}{\mathbb{E}}
\newcommand{\esperance}{\mathbb{E}}
\newcommand{\PP}{\mathbb{P}}
\newcommand{\NN}{\mathbb{N}}
\newcommand{\RR}{\mathbb{R}}
\newcommand{\B}{\cal B}
\newcommand{\moche}{\mathcal{S}}
\newcommand{\tildeb}{{\tilde b}}
\newcommand{\ty}{{\tilde y}}
\begin{document}
\title{Stochastic models for a chemostat and long time behavior}
\author{P. Collet\thanks
{ Centre de Physique Th\'eorique, CNRS UMR 7644,
 Ecole Polytechnique
 F-91128 Palaiseau Cedex (France);
 e-mail: \texttt{collet@cpht.polytechnique.fr}}, Servet Mart\'inez\thanks{Departamento Ingenier{\'\i}a Matem\'atica and Centro
Modelamiento Matem\'atico, Universidad de Chile,
UMI 2807 CNRS, Casilla 170-3, Correo 3, Santiago, Chile;
\texttt{e-mail: smartine@dim.uchile.cl}},  Sylvie
  M\'el\'eard\thanks{CMAP, Ecole Polytechnique, CNRS, route de Saclay, 91128 Palaiseau
    Cedex-France; e-mail: \texttt{sylvie.meleard@polytechnique.edu}}, Jaime San Mart\'in\thanks{Departamento Ingenier{\'\i}a Matem\'atica and Centro
Modelamiento Matem\'atico, Universidad de Chile,
UMI 2807 CNRS, Casilla 170-3, Correo 3, Santiago, Chile;
\texttt{e-mail: jsanmart@dim.uchile.cl
}}}

\bigskip

\maketitle

\begin{abstract}
We introduce two  stochastic chemostat models consisting in a coupled population-nutrient  process  reflecting the interaction between the nutrient and the bacterias in the chemostat with finite volume. The nutrient concentration evolves continuously but depending on the population size, while the population size is a  birth and death process with coefficients depending on time through the nutrient concentration. The nutrient is shared by the bacteria and creates  a regulation of the bacterial population size. The latter and the fluctuations due to the random  births and deaths of individuals  make the population go almost surely to extinction. Therefore,   we are interested in the long time behavior of the bacterial population  conditioned to the non-extinction.  We prove the global existence of the process and  its almost sure extinction. The existence of quasi-stationary distributions is obtained  based on a general fixed point argument. Moreover, we prove the absolute continuity of the nutrient distribution when conditioned to a fixed number of individuals and the smoothness of the corresponding densities. 
\end{abstract}

\parindent=0pt

\section{Introduction}

Since some decades and the first work of J. Monod  \cite{monod} and Novik and Szilar  \cite{ns}, \cite{ns2}, see also \cite{SW}, biologists have developed procedures which allow to maintain a bacterial population at a stationary finite size while at the same time, the bacteria have a constant individual growth rate. The procedure is based on a chemostat: bacteria live in a growth container of constant volume in which liquid is injected continuously. 
The entering  liquid  contains  a fixed concentration of nutrient but no bacteria (fresh liquid). In the container, the nutrient is consumed by the bacteria.  We assume that the chemostat is well stirred, so that the distribution of bacteria and nutrient are spatially uniform.   Since the container has a finite volume and fresh liquid enters permanently, an equal amount of liquid pours out containing both unconsumed nutrients and bacteria. 

These chemostats are extremely useful in the study of bacterial population dynamics, in particular  in the study of  the  selection of the fastest growing species or  the fixation of advantageous mutations (see \cite{bgm}, \cite{bbgm}, \cite{dh}  or \cite{lls}). In the literature, their study is mainly  based on deterministic models where both nutrient and bacteria population dynamics are described by a coupled deterministic continuous process. Deterministic approximations for the bacterial population's size are justified by a large population assumption. 

\medskip
In this work, we develop stochastic chemostat models based on a previous work of Crump and O' Young \cite{CW} taking into account that the bacteria population may not be large enough so that a deterministic approximation can be justified. We introduce a coupled population-nutrient process  reflecting the interaction between the nutrient and the bacterias in the chemostat. The nutrient concentration evolves continuously but depending on the population size, while the population size is a  birth and death process with coefficients depending on time through the nutrient concentration. Moreover, the time derivative of the nutrient concentration jumps simultaneously with the population size. 
 We thus take into account the random fluctuations of this population size due to the individual births and deaths.
 The bacteria need nutrient to reproduce. We will consider two cases. In the first one,  the bacterial population dies instantaneously if the nutrient is missing. In the second case bacteria can survive without nutrient by undergoing some kind of hibernation and may wake up once nutrient reappears. 
The nutrient is shared by the bacteria. This creates an indirect competition between bacteria and leads to a regulation of their population size. In our models, the fluctuations due to the random  births and deaths of individuals and the size regulation make the population go almost surely to extinction. Therefore, the long time behavior of the population's size is obvious  and  the interesting questions concern firstly the rate of extinction and secondly the long time behavior conditioned to the non-extinction  which is captured by the notion of quasi-stationary distribution.

\medskip
To our knowledge, the models introduced in this paper are the first stochastic chemostat models  where interaction between bacteria is taken into account leading to extinction. The study of quasi-stationarity gives nevertheless a description of a quasi-stability which can happen in a faster time scale than extinction. This work concerns monotype individuals but could be generalized to a multi-type population.

\medskip
In Section 2, we describe the two population-nutrient  models described above and prove in Section 3  their  global existence. We also show the extinction of the population when time increases. The existence of quasi-stationary distributions is obtained in Section 4. Our main theorem is based on a general argument proved in \cite{quatres}.  In Section 5, we prove the absolute continuity of the nutrient distribution when conditioned to a fixed number of individuals and the smoothness of the corresponding densities.

\section{The Stochastic  Population-Nutrient Process}

We consider a stochastic discrete population process describing the dynamics of a bacteria
population for which individuals develop and reproduce depending on the quantity of 
nutrient $y$ in the solution. The dynamics of the nutrient is related to the 
consumption of the individuals. We assume that the concentration of nutrient in the 
injected solution (without bacteria) is a constant equal to $y^*$.  The chemostat has a finite volume equal to one. The liquid enters in the chemostat free of bacteria and  pours out after being well stirred in the container. The pouring  out liquid contains bacteria. The dilution coefficient of nutrient in the fresh liquid per unit of time is $D$. Since the liquid is well stirred, around $N(t)D$ bacterias will be washed out in the pouring out, when $N(t)$ is the size of the bacteria population at time $t$. Thus $D$ is also  the rate at which an individual will disappear due to the evacuation of liquid.

We consider  coupled processes in which the nutrient concentration evolves continuously while the bacteria population size  
 evolves as a time-continuous birth and death process with coefficients depending on the nutrient concentration.  We assume that the nutrient is partly consumed during the reproduction of bacteria. 
 
 We will denote by $(Y(t), t\geq 0)$ the concentration of nutrient and by 
$(N(t): t\geq 0)$ the population size process. The stochastic process $Z=(Z(t):=(N(t), Y(t)): t\geq 0)$ describes 
both the population size and the nutrient concentration in the chemostat.  

\bigskip
Let us now define  the parameters of the model.

\medskip

If  $y$ is the 
quantity of nutrient, then the birth and death parameters driving the dynamics of the population are as follows.

\medskip

\noindent $\bullet$ The birth rate per individual is 
$b(y)$, where the function $b:\RR_+\to \RR_+$ is assumed to be 
an increasing continuous function and such that $b(0)=0$ and $b(y)>0$ 
for $y>0$. We assume that $b$ is bounded with  an upper-bound 
$b_{\infty} $. 

\medskip

A usual example of a function $b$ is, for some constant  $K>0$,
$$
b(y)=b_{\infty} \,{y\over K+y} \,.
$$
An extra hypothesis that we will add for some results is that 
$b$ is differentiable and ${db\over dy}(0)>0$.

\medskip

\noindent $\bullet$ The background death rate per individual is $d(y)$,
so it is supposed to be a function of the concentration of nutrient.  The function
$d:\RR_+\to \RR_+\cup \{\infty\}$ is assumed to be continuous
non-increasing, strictly positive, and $d(0)$ is the unique value that
can be infinite.  

\medskip

\noindent $\bullet$  The dilution makes each individual disappear at rate $D$ independently  of the birth and death events.

\medskip



\noindent $\bullet$  The per individual rate of consumption of nutrient for reproduction  
is $ {b(y)\over R}$, where $R$ denotes the biomass yield.
Furthermore,  individuals  consume nutrient during their life and  the quantity of nutrient  consumed per individual will be denoted by 
$\eta\geq 0$.

\noindent $\bullet$  We will consider two cases.

\noindent  In the first case,  individuals need nutrient to survive. Then we will assume  that their death is instantaneous as soon as nutrient is missing, therefore
$d(0)=\infty$. 

\noindent  In the second case, bacteria enter in some kind of hibernation if nutrient is missing. That means that $d_{0}(0)$ can be finite. 

\noindent  In  both  cases, we will set $$\tilde b(y) = {b(y)\over R} + \eta\, {\bf 1}_{y>0}.$$ 

\medskip

Let us now describe the process. In both cases the nutrient concentration process $\,Y=(Y(t): t\geq 0)$ evolves according to 
\begin{equation}
\label{frontera0}
{d Y(t)\over dt} = 
D(y^*-Y(t)) - \tilde b(Y(t))\, N(t) \,.
\end{equation}

The process $Z$
has the following infinitesimal generator: for $(n,y)\in \mathbb{N}\times\mathbb{R}_{+}$,
\begin{eqnarray}
\nonumber
\mathcal{L}f(n,y)&=&b(y)\,n\, f(n+1,y)+ (D+d(y))\,n\,f(n-1,y)\\
\nonumber
&& -(b(y)+D+d(y))\,n\,f(n,y) \\
\label{generator}
&& +\left(D(y^*-y)-n\,\tildeb(y)\right)\; \partial_{y}f(n,y)\;.
\end{eqnarray}

\medskip  We refer to  \cite{CJL}  for the numerical study of  similar models.

\bigskip
Below, we show that the hypotheses we gave on
the coefficients leading the process, guarantee that $Z$ is well-defined
and  takes values in $\mathbb{N}\times [0,y^*]$.

\begin{proposition} 
\label{defproc}
The process $Z$ is well defined and takes values in
$\NN\times \RR_+$ for all positive time $t\in \mathbb{R}_{+}$. 
Moreover, $\NN_+\times [0,y^*]$ is an invariant set for the process $Z$,
so $Y(0)\in [0,y^*]$ implies $Y(t)\in [0,y^*]$ for all $t\in \RR_+$.
\end{proposition}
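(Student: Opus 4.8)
The plan is to realise $Z$ as a piecewise-deterministic Markov process, built by gluing the deterministic nutrient flow between the successive jumps of $N$, and then to read off the invariance of $\NN\times[0,y^*]$ directly from the signs of the vector field at the boundary.

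First I would analyse, for each fixed population size $n\in\NN$, the nutrient ODE $\dot y=F_n(y):=D(y^*-y)-n\,\tildeb(y)$. On any compact subset of $(0,y^*]$ the right-hand side is continuous (and Lipschitz in the differentiable case), so Cauchy--Lipschitz provides a unique local solution away from $y=0$. I would then show that $[0,y^*]$ is forward-invariant for this flow by checking the boundary: at the top, $F_n(y^*)=-n\,\tildeb(y^*)\le 0$; at the bottom, $F_n(0)=Dy^*\ge 0$, since $\tildeb(0)=b(0)/R+\eta\,\mathbf 1_{\{0>0\}}=0$ (here $b(0)=0$ is essential). The field thus points inward at both ends, no interior trajectory can exit, and the same inward sign at $0$ gives $Y(t)\ge 0$ even when $Y(0)>y^*$ (where $F_n<0$ drives $Y$ back down to $y^*$), which yields the $\NN\times\RR_+$ part of the statement.

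Next I would construct $Z$ inductively over the jump times $0=T_0<T_1<\cdots$ of $N$: on each interval $[T_k,T_{k+1})$ the size $N$ is frozen at some $n$ and $Y$ follows the flow of $F_n$; given $(n,y)$ the total jump rate is $\lambda(n,y)=n\,(b(y)+D+d(y))$, a jump raising $N$ by one with probability $b(y)/(b(y)+D+d(y))$ and lowering it by one otherwise, with $Y$ unchanged at the jump. Since jumps never move the nutrient, the invariance of the previous step survives every jump, so $Y(0)\in[0,y^*]$ forces $Y(t)\in[0,y^*]$ for all $t$. To exclude explosion of the jump times I would dominate $N$: births occur at per-capita rate $b(Y)\le b_\infty$, so $N$ is stochastically below a Yule process of parameter $b_\infty$, whence $\sup_{s\le t}N(s)<\infty$ a.s. Because $N\ge0$, the number of downward jumps on $[0,t]$ is at most $N(0)$ plus the number of upward jumps, so the total number of jumps is a.s. finite and $Z$ is defined for all $t$.

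The step I expect to cost the most work is case 1, where $d(0)=\infty$ and, when $\eta>0$, $\tildeb$ jumps at $0$: there the downward rate degenerates and $F_n$ is only one-sided at the boundary. The inward drift $F_n(0)=Dy^*>0$ still keeps $Y$ nonnegative and prevents the trajectory from lingering on $\{y=0\}$ while $N\ge1$, so the singular set is hit at most instantaneously and triggers precisely the instantaneous-death convention of the model ($N$ collapsing through a finite cascade of deaths). Combined with the death-counting bound above --- which caps the number of death events regardless of how large $d$ becomes --- this shows the infinite rate produces no accumulation of jumps; turning this heuristic into a clean construction at $\{y=0\}$ is the delicate point.
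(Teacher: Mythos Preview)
Your argument is correct and follows the same skeleton as the paper's proof: domination of $N$ by a pure birth (Yule) process of rate $b_\infty$ for non-explosion, and a sign check on the drift at $y=0$ and $y=y^*$ for the invariance of $[0,y^*]$. The paper packages the construction via a Poisson point measure representation rather than an explicit PDMP gluing, and obtains the upper bound $Y(t)\le y^*$ by the comparison $Y(t)\le v(t)$ with $v'=D(y^*-v)$, $v(0)=Y(0)$, instead of your boundary check $F_n(y^*)\le 0$; these are equivalent. Your counting bound ``$\#\text{deaths}\le N(0)+\#\text{births}$'' is in fact a sharper way to rule out accumulation of jumps than what the paper writes, since the Yule bound alone only controls $\sup N$ and not the total jump count when $d$ is unbounded. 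Your flag on the discontinuity of $\tildeb$ at $0$ and the case $d(0)=\infty$ is apt: the paper does not treat this inside the present proof either, simply noting that $dY/dt$ cannot be negative at $y=0$, and defers the behaviour at $\{Y=0\}$ to the discussion following the hitting-time definitions in the next section.
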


\begin{proof} 
The process $N=(N(t): t\ge 0)$ has a pathwise representation driven by a Point 
Poisson measure ${\cal N}(d\theta, ds)$ defined on 
$\mathbb{R}_{+} \times \mathbb{R}_{+}$: 
\begin{eqnarray} 
\nonumber
N(t) &=& N_{0}\! + \!\!\!\!\! \int\limits_{\mathbb{R}_{+} \times (0,t]}\!\!\!\!\! 
{\bf 1}_{\theta\leq b(Y_{s})} {\cal N}(d\theta, ds) \!-\! 
\!\!\!\!\!\int\limits_{\mathbb{R}_{+} \times (0,t]} \!\!\!\!\! 
{\bf 1}_{b(Y_{s})\leq \theta\leq b(Y_{s})+ D + d(Y_{s})} 
{\cal N}(d\theta, ds);\\ 
\label{systeme}
Y(t) &=& Y(0) +\!\! \int_{0}^t \!\! \bigg(D(y^* - Y(s)) - \tildeb(Y(s)) 
\, N_{s}\bigg) ds.
\label{systeme} 
\end{eqnarray}

It is obvious that the process $N$ 
is stochastically upper-bounded by a birth 
process with individual birth rate $b_{\infty}$. This latter does not explode, so 
does $N$.

\medskip 

Let us now study the nutrient concentration $Y=(Y(t): t\geq 0)$. Note that
the assumptions on the parameters and Equation  (\ref{frontera0}) ensure  that $Y(t)\geq 0$.  Indeed, the derivative of $Y(t)$ 
at $y=0$ cannot be negative. 

\medskip

Let us show that $\NN\times [0,y^*]$ is invariant. Take
$Y(0)\in [0, y^*]$. A standard comparison theorem yields $Y(t)\leq v(t)$ where 
$v'(t) = D(y^*-v(t))\ ;\ v(0)=Y(0)$. 
But in that case, $v(t)= y^* - (y^*-Y(0))e^{-Dt} \leq y^*$ then it remains
in $ [0,y^*]$ forever proving the invariance.  

\end{proof}

For the initial condition $N(0)\in \NN^*$ 
and $Y(0)\in \mathbb{R_{+}}\setminus [0,y^*]$
we have that at time $t=(Y(0)-y^*)/\tildeb(y^*)$ the process has already 
attained the invariant set $[0, y^*]\times \RR_+$ or became extinct.

\section{Study of Extinction}

We are now interested in  studying the extinction of the population  or  the complete consumption of the nutrient or other specific states of the population-nutrient process and the associated hitting times. 

\medskip
Let $\B(\NN\times \RR_+)$ be the class of Borel sets of $\NN\times \RR_+$
and $\B(\RR_+)$ the class of Borel sets of $\RR_+$.
For $D\in \B(\NN\times \RR_+)$ we put by
$T^D=\inf\{t\ge 0: Z(t)\in D\}$ the hitting time of $D$ by the process, with 
the usual convention $\infty=\inf \emptyset$.   
The hitting time of the boundaries will be denoted in such a way that 
the reference to process $N$ will be avoided, the contrary for $Y$ where 
we will explicit it. More precisely,
\begin{equation}
\label{hittimes}
T_0\!=\!T^{\{0\}\times \RR_+}\!=\!\inf\{t\!\ge \!0: N(t)\!=\!0\},\; 
T_{Y=0}\!=\!T^{\NN\times \{0\}}\!=\!\inf\{t\!\ge \!0: Y(t)\!=\!0\}.
\end{equation}
We will also denote by $T_{\le m}=T^{\{0,..,m\}\times \RR_+}=\inf\{t\ge 0: N(t)\le m\}$.
Analogously for  $T_{< m}$.

\medskip 

Note that the set $\{0\}\times \RR_+$ is an absorbing set, that is $N(t)=0$ for all
$t\ge T_0$. 

\medskip

After $T_0$ the nutrient $Y(t)$ is absorbed linearly at $y^*$, in fact: 
$Y(t)=y^*$ for $t\ge T_0+(y^*-Y(T_0))/D\,y^*$ and $Y(t)=Y(T_0)+D(t-T_0)$ for 
$t\in [T_0,T_0+(y^*-Y(T_0))/D]$.

\medskip

Let us see what happens after $T_{Y=0}$.  In the case $\eta\ge D y^*$  
the nutrient $Y(t)$ remains at $0$ up to the extinction of the
population, so $Y(t)=0$ when $T_{Y=0}<t<T_0$. The evolution of
the nutrient after $T_{0}$ was already described. 
Now assume $n\,\eta >  Dy^*$ for some $n\ge 1$, and denote $n_0$ the
minimal of these values, so $(n_0-1)\,  \eta\leq Dy^* < n_0 \,\eta$.
In this case $Y(t)=0$ when $T_{Y=0}<t<T_{< n_0}$.  
In the case where $d_{0}(0) = \infty$, we get
$$T_{Y=0} \leq T_{0} \Longrightarrow  T_{Y=0}  = T_{0}$$ because all individuals die instantaneously. 


\bigskip
Let us firstly study the stationary nutrient concentration states at fixed  population size. 
\medskip

\begin{lemma} 
\label{roots1}
Consider the equation
\begin{equation}
\label{root} 
G_n(y)=0 \, \hbox{ with } \ G_{n}(y):=D(y^*-y)-n\,\tildeb(y) \,. 
\end{equation}
Then,

\smallskip

\noindent $(i)$ If $\eta=0$, then for any $n\in \mathbb{N}$, 
Equation \eqref{root} has a unique simple root $y_{n}$, which belongs to $[0,y^*]$. 
In addition, the sequence $(y_{n}: n\in \NN^*)$ of the roots decreases to $0$.

\smallskip

\noindent $(ii)$ If $\eta>0$ then Equation \eqref{root} has no root 
for $n> {Dy^*\over \eta}$ and admits a simple root $y_{n}$ for $n\le {Dy^*\over \eta}$. 
\end{lemma}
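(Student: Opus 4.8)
The plan is to treat $G_n$ as a one-variable function on $[0,y^*]$ and exploit strict monotonicity together with the intermediate value theorem; the only genuine subtlety is the jump of $\tilde b$ at the origin coming from the indicator $\mathbf{1}_{y>0}$ when $\eta>0$. First I would record the structural facts. Since $b$ is continuous and increasing with $b(0)=0$, the map $y\mapsto D(y^*-y)-n\,b(y)/R$ is continuous and \emph{strictly} decreasing on $[0,y^*]$: the affine part $D(y^*-y)$ is strictly decreasing and $-n\,b(y)/R$ is non-increasing. I also compute the boundary values $G_n(0)=Dy^*>0$ (using $\tilde b(0)=0$) and $G_n(y^*)=-n\,\tilde b(y^*)<0$ for $n\ge 1$, and note that for $y>y^*$ one has $G_n(y)<0$, so no root can lie outside $[0,y^*]$.

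For part $(i)$, $\eta=0$ makes $\tilde b(y)=b(y)/R$ continuous, hence $G_n$ is continuous and strictly decreasing on all of $[0,y^*]$; combined with $G_n(0)>0\ge G_n(y^*)$ this yields existence and uniqueness of a root $y_n$, with $y_0=y^*$ and $y_n\in(0,y^*)$ for $n\ge 1$. Simplicity is the transversal sign change forced by strict monotonicity (and, under the extra hypothesis that $b$ is differentiable, from $G_n'(y_n)=-D-n\,b'(y_n)/R<0$). For the decay claim I use that for each fixed $y\in(0,y^*]$ the value $G_n(y)$ is strictly decreasing in $n$: evaluating $G_{n+1}(y_n)=G_n(y_n)-b(y_n)/R=-b(y_n)/R<0$ and recalling that $G_{n+1}$ is decreasing forces $y_{n+1}<y_n$. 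Thus $(y_n)_{n\in\NN^*}$ decreases to a limit $\ell\ge 0$; from the defining relation $n\,b(y_n)/R=D(y^*-y_n)\le Dy^*$ I get $b(y_n)\le RDy^*/n\to 0$, and continuity of $b$ together with $b(y)>0$ for $y>0$ gives $b(\ell)=0$, i.e.\ $\ell=0$.

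For part $(ii)$ the jump is what produces the threshold. One has $G_n(0)=Dy^*>0$, but $\lim_{y\to 0^+}G_n(y)=Dy^*-n\,\eta$ since $b(0)=0$, while on $(0,y^*]$ the function is again continuous, strictly decreasing, with $G_n(y^*)<0$. Hence a root in $(0,y^*)$ exists iff the right limit at $0$ is positive, that is iff $Dy^*-n\,\eta>0$, i.e.\ $n<Dy^*/\eta$; when it exists it is unique and simple by the same monotonicity argument. When $n>Dy^*/\eta$ the right limit is negative and $G_n<0$ on all of $(0,y^*]$, while the isolated value $G_n(0)=Dy^*$ is positive, so the equation has no solution.

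The step I expect to be the main obstacle is precisely this boundary bookkeeping in $(ii)$: one must argue that $y=0$ never counts as a root, because $G_n(0)=Dy^*\neq 0$ for every finite $n$, and one must dispose of the degenerate case $n=Dy^*/\eta$. In that borderline case $G_n(y)=-Dy-n\,b(y)/R<0$ for all $y>0$, so the right limit at $0$ equals $0$ but is not attained and no actual root exists; the inequality $n\le Dy^*/\eta$ in the statement should therefore be read as the convention covering this measure-zero threshold, the clean existence region being $n<Dy^*/\eta$. Everything else is an immediate consequence of continuity, strict monotonicity, and the intermediate value theorem.
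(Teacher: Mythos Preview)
Your argument is correct and follows the same route as the paper: strict monotonicity of $G_n$ combined with the intermediate value theorem, then the observation $G_{n+1}(y_n)<0$ for the decrease of $(y_n)$, and a limit argument for $y_n\to 0$. Your bound $b(y_n)\le RDy^*/n\to 0$ is in fact cleaner than the paper's passage to the limit.

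The one place you diverge is the boundary bookkeeping in $(ii)$, and here you are more scrupulous than the paper. The paper writes ``for all $n>Dy^*/\eta$ we have $G_n(0)<0$'' and ``when $n_0=Dy^*/\eta$ then $y_{n_0}=0$'', effectively using the right limit $\lim_{y\to 0^+}G_n(y)=Dy^*-n\eta$ in place of the literal value $G_n(0)=Dy^*$; with that convention the borderline case yields the root $y_{n_0}=0$. You instead keep the two apart and conclude that no genuine root exists at the threshold, flagging the $\le$ in the statement as a convention. Either reading is defensible --- the paper's choice is natural because the dynamics do sit at $y=0$ when $n=n_0$ --- but your analysis of the discontinuity is the more accurate description of $G_n$ as defined.
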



\begin{proof} Let us fix $n$. Obviously for $n=0$, Equation \eqref{root}  has  the trivial root $y_{0}=y^*$,
so we restrict $n$ to be in $\NN^*$. By assumption, the function 
$G_{n}$ is strictly decreasing 
so for each $n\in \NN^*$ there exists at most one root. 
Note that for all such $n$ we have
$G_{n}(y^*)<0$ and so there is no root to \eqref{root} in the set 
$[y^*,\infty)$.

\medskip

Assume $\eta=0$. For all $n$ we have
$G_{n}(0)>0$ then there exists a unique root which is denoted 
$y_n$, so it satisfies $G_{n}(y_{n})=0$. On the other hand, we have 
$$
G_{n+1}(y_{n})= 
D(y^*-y_{n})-(n+1)\,\tildeb(y_{n}) = -\,\tildeb(y_{n})\,.
$$ 
Then, $G_{n+1}(0)>0, G_{n+1}(y_{n})<0$. We deduce $0<y_{n+1}<y_{n} <y^*$.
Let $y_\infty=\lim\limits_{n\to \infty} y_n$. By continuity
$$
D(y^*-y_\infty)-n\,\tildeb(y_\infty)=
\lim\limits_{n\to \infty}D(y^*-y_{n})-n\,\tildeb(y_{n})=0.
$$
Then, necessarily $\tildeb(y_\infty)=0$ and so $y_\infty=0$, and $b(y_{n})\sim {Dy^* R\over n}$ as $n\to \infty$.

\medskip

Assume $\eta>0$. Then for all $n>Dy^*/\eta$
we  have $G_{n}(0)< 0$ and so \eqref{root} has no solution. 
Hence the same
argument as before gives the existence of a finite set of roots
$(y_n: 1\le n\le \lfloor Dy^*/\eta \rfloor)$  decreasing with $n$, where $\lfloor Dy^*/\eta \rfloor$  the biggest integer 
that is smaller or equal to $Dy^*/\eta$.

\medskip

Note that when $n_0=Dy^*/\eta$ then $y_{n_0}=0$ and if $Dy^*<\eta$,
there is no root.  
\end{proof}

\bigskip
From Lemma \ref{roots1}, 
 we know that the set 
 \begin{equation}
 \label{moche}
\moche = \{y\in \RR_+: \, \exists n\in \NN^*, D(y^*-y) -{\tilde b(y) n} =0\}.
\end{equation}
 is a countable set
included in $[0,y^*)$. If $\eta=0$, it is infinite and accumulates at $0$
and if $\eta>0$,  it is finite.

\bigskip

In the  sequel, when we refer to $y_n$, we will assume implicitly that it exists,
namely we are in the case  $\eta=0$  or $\eta>0$ but 
$n\in \{1,..\lfloor Dy^*/\eta \rfloor\}$.

\medskip

\begin{corollary}
\label{masinva}
$(i)$ The set $\NN\times [0,y_1]$ is invariant for the process $Z^{T_0}=(Z(t): t\le T_0)$
up to extinction, that is if $Z(0)=(N(0), Y(0))\in \NN\times [0,y_1]$ then
$Z(t)=(N(t), Y(t))\in \NN\times [0,y_1]$ for all $t\le T_0$. 

\medskip



\noindent $(ii)$ The set $\NN\times [0,y_n]$ is invariant for the process 
$Z^{T_{<n}}=(Z(t): t\le T_{<n})$.
\end{corollary}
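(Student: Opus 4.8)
The plan is to exploit that $Y$ is a continuous (indeed absolutely continuous) function of time, since only $N$ jumps while, by the integral equation \eqref{systeme}, $Y'(s)=G_{N_s}(Y(s))$ for almost every $s$. The argument rests on two elementary monotonicity properties of the family $G_n$ coming from Lemma \ref{roots1}. First, for fixed $y\ge 0$ the map $n\mapsto G_n(y)=D(y^*-y)-n\,\tilde b(y)$ is nonincreasing, because $G_m(y)-G_n(y)=-(m-n)\,\tilde b(y)\le 0$ whenever $m\ge n$. Second, for fixed $n$ the function $G_n$ is strictly decreasing in $y$ with unique zero $y_n$, so $y_n$ is the globally attracting equilibrium of the autonomous scalar ODE $w'=G_n(w)$; noting in addition that $G_n(0)=Dy^*>0$, any solution of this ODE started in $[0,y_n]$ remains in $[0,y_n]$.

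For part $(ii)$ I would argue as follows. On the random interval $[0,T_{<n})$ one has $N_s\ge n$ by definition of $T_{<n}$, hence by the first monotonicity $Y'(s)=G_{N_s}(Y(s))\le G_n(Y(s))$ for a.e.\ $s<T_{<n}$. Assuming $Y(0)\le y_n$, I would compare against the autonomous flow $w'=G_n(w)$, $w(0)=Y(0)$: since $G_n$ is decreasing, hence one-sided Lipschitz from above, the differential inequality yields $Y(t)\le w(t)$, and the equilibrium property gives $w(t)\le y_n$, so $Y(t)\le y_n$ up to $T_{<n}$. Together with $Y(t)\ge 0$ from Proposition \ref{defproc} this gives $Z(t)\in\NN\times[0,y_n]$ on $[0,T_{<n})$, and the bound passes to the closed interval by continuity of $Y$. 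Part $(i)$ is then the special case $n=1$, for which $T_{<1}=T_0$ and the barrier is $y_1$; here I would observe directly that $G_n(y_1)=(1-n)\tilde b(y_1)\le 0$ for every $n\ge 1$, so the drift at the level $y_1$ is inward (or tangent) as long as the population has not died out.

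Rather than quote a comparison theorem, I would prefer a self-contained contradiction that also removes any regularity worry. Suppose $Y$ leaves $[0,y_n]$ through the top before $T_{<n}$, at some time $t_1$, and set $\tau=\sup\{t\le t_1:Y(t)\le y_n\}$. By continuity $Y(\tau)=y_n$ and $Y(s)>y_n$ on $(\tau,t_1]$. Then for a.e.\ $s\in(\tau,t_1)$ one has $N_s\ge n$ and $Y(s)>y_n$, so by the two monotonicities $Y'(s)=G_{N_s}(Y(s))\le G_n(Y(s))<G_n(y_n)=0$, forcing $Y(t_1)=Y(\tau)+\int_\tau^{t_1}Y'(s)\,ds<y_n$, a contradiction. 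This excursion argument uses only continuity of $Y$ and the a.e.\ identity for $Y'$, so it is insensitive both to the jumps of $N$ and to the discontinuity of $\tilde b$ at the origin.

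The main difficulty here is bookkeeping rather than substance: one must invoke the inequality $Y'\le G_n(Y)$ only where it holds, namely for $s<T_{<n}$ where $N_s\ge n$, and be mildly careful in the boundary case $\eta>0$ with $n=\lfloor Dy^*/\eta\rfloor$ and $Dy^*/\eta\in\NN$, where $y_n=0$ and the invariant strip degenerates to $\NN\times\{0\}$; there the lower barrier of Proposition \ref{defproc} already suffices. No genuinely hard step is anticipated.
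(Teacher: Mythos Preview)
Your proof is correct and follows essentially the same approach as the paper's own proof: both rest on the barrier observation that $G_m(y)\le G_n(y)\le 0$ whenever $m\ge n$ and $y\ge y_n$, so the drift of $Y$ is nonpositive at or above $y_n$ while $N\ge n$, and the continuity of $Y$ carries the bound to $t=T_{<n}$. Your excursion/contradiction argument and the remark that part $(i)$ is just part $(ii)$ with $n=1$ (since $T_{<1}=T_0$) are more explicit than the paper's one-line proof, but the idea is identical.
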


\begin{proof}
Let us show the first part. We have 
$Z(t)=(N(t), Y(t))\in \NN^*\times [0,y_1]$ for all $t< T_0$ 
because $dY(t)/dt\le 0$ when $y\ge y_1$ and $n\ge 1$, and so if
the trajectory arrives to $y_1$ the variable $Y(t)$ immediately decreases.
For $n=0$ it is evident, because we stop the process at $T_{0}$, and so
$Y(T_0)=Y(T_0^-)$ but $Y(T_0^-)\le y_1$ since $N(T_0^-)=1$.
 
\medskip

 Part $(ii)$ is shown in a similar way
as $(i)$.
\end{proof}

\bigskip 
Let us state  a useful lemma. 


\begin{lemma}\label{bornemort}
For any $n_{0} \in \mathbb{N}^*$, there exists $t_{0}>0$ such that
$$
\inf_{y\in[0,y^{*}],\, 1\le n\le
n_{0}}\proba_{(n,y)}\big(T_{0}<t_{0}\big)>0\;.
$$
\end{lemma}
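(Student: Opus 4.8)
The plan is to force, with uniformly positive probability, a run of $m=N(0)\le n_0$ consecutive death events that empties the population before $t_0$. The mechanism is that dilution alone gives each individual a death rate bounded below by $D>0$, while the per-individual birth rate is bounded above by $b_\infty$; crucially, both bounds are independent of $y$.

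First I would fix any $t_0>0$ and set $q=\frac{D}{b_\infty+D}\in(0,1)$. Starting from a state $(k,y)$ with $k\ge 1$ and $y\in[0,y^*]$, consider the waiting time $\sigma$ to the first jump. Between jumps $Y$ follows the flow \eqref{frontera0} and stays in $[0,y^*]$ by Proposition \ref{defproc}, so the jump hazard at time $t$ equals $k\bigl(b(Y_t)+D+d(Y_t)\bigr)\ge D$; hence $\proba_{(k,y)}(\sigma<s)\ge 1-e^{-Ds}$ for all $s>0$. Moreover, whatever the value of $Y_\sigma$, the conditional probability that this jump is a death is $\frac{D+d(Y_\sigma)}{b(Y_\sigma)+D+d(Y_\sigma)}\ge q$, since $b(Y_\sigma)\le b_\infty$ and $d\ge 0$. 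Conditioning on $\sigma$ and integrating therefore gives the uniform one-step estimate $\proba_{(k,y)}\bigl(\{\text{first jump is a death}\}\cap\{\sigma<s\}\bigr)\ge q\,(1-e^{-Ds})$.

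I would then iterate this bound by the strong Markov property, applying it with $s=t_0/m$ at each of $m$ successive jumps; at every step the current population lies between $1$ and $m$ and the current nutrient in $[0,y^*]$, so the estimate applies. This yields $\proba_{(m,y)}\bigl(m\text{ deaths in a row, each within time }t_0/m\bigr)\ge\bigl(q(1-e^{-Dt_0/m})\bigr)^m$, and on this event $T_0<t_0$. Since $1\le m\le n_0$ and the base lies in $(0,1)$, we obtain $\proba_{(m,y)}(T_0<t_0)\ge\bigl(q(1-e^{-Dt_0/n_0})\bigr)^{n_0}=:c>0$, uniformly in $m$ and $y$, which is exactly the claim.

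The point requiring care is that $Z$ is a piecewise-deterministic Markov process, not a pure jump chain: because $Y$ drifts between jumps the holding times are not exponential and a jump's type is correlated with its time. This is precisely why I keep both estimates --- ``jump before $s$'' via the hazard lower bound $D$, and ``jump is a death'' via the lower bound $q$ --- in a form valid pointwise in $t$, so that conditioning on $\sigma$ and then invoking the strong Markov property is legitimate. The degenerate case $d(0)=\infty$ is harmless: near $y=0$ the death rate blows up, but $\tildeb(0)=0$ makes $Y$ drift upward off the boundary, and a large death rate only accelerates extinction, so none of the lower bounds are affected. Equivalently, one could dominate $N$ pathwise by a linear birth--death process with per-capita rates $b_\infty$ and $D$ and invoke the positivity of its extinction probability by time $t_0$.
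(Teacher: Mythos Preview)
Your proof is correct. The paper's own argument is the one-sentence domination you mention in your final remark: $N$ is stochastically dominated by a linear birth--death process with per-capita birth rate $b(y^*)$ and death rate $D+d(y^*)$, and such a process started from $n\le n_0$ has uniformly positive probability of extinction by any fixed time $t_0$.

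Your main argument therefore takes a different and more explicit route. Rather than invoking a comparison chain, you construct the favourable event directly --- $m$ consecutive deaths, each within time $t_0/m$ --- and bound its probability using only the two pointwise estimates: hazard $\ge D$ and death-probability $\ge D/(b_\infty+D)$, both valid uniformly in $y$. This is slightly more laborious but entirely self-contained; it does not rely on any coupling construction or on outside facts about linear birth--death processes. The paper's approach is shorter and immediately reusable (the same domination underlies the later extinction theorem), whereas yours has the advantage of producing an explicit lower bound $\bigl(\tfrac{D}{b_\infty+D}(1-e^{-Dt_0/n_0})\bigr)^{n_0}$ and of making transparent exactly which features of the model are used (boundedness of $b$, positivity of $D$, and nothing else). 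Your care about the PDMP structure --- that the jump time and type are correlated through $Y_\sigma$ --- is well placed; conditioning on $\sigma$ and using the pointwise bound on the death fraction is the right way to decouple them.
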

\begin{proof}
This follows at once from the fact that the population process is stochastically dominated by a birth and death process with birth rate $b(y^*)$ and death rate $D+ d(y^*)$.
\end{proof}

\begin{theorem}
We have extinction of the population almost surely, namely for any $y\in
[0,y^{*}]$ and any integer $n$,
$$
\proba_{(n,y)}\big(T_{0}<\infty\big)=1\;.
$$
\end{theorem}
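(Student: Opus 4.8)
The plan is to produce a Lyapunov function that forces the population back down to bounded values, and then to convert the local extinction estimate of Lemma \ref{bornemort} into a global one via the strong Markov property. The main obstacle is that Proposition \ref{defproc} only provides an \emph{upper} comparison of $N$ with a pure birth process, which gives non-explosion but no downward push; the mechanism that actually drives large populations down is the depletion of the nutrient, so one must exploit the coupling between $N$ and $Y$ rather than $N$ alone.

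First I would introduce the function $M(n,y)=n/R+y$, which is nonnegative on the invariant domain $\NN\times[0,y^*]$ (recall $Y\ge 0$ by Proposition \ref{defproc}). Applying the generator \eqref{generator}, the terms proportional to $ny$ cancel because $\partial_y M\equiv 1$ and births/deaths leave $y$ unchanged, and the reproduction-consumption contributions $+\,nb(y)/R$ (from the birth--death part) and $-\,nb(y)/R$ (from the drift $-n\tildeb(y)\,\partial_y M$) cancel as well, leaving
\[
\mathcal{L}M(n,y)=D(y^*-y)-\frac{n\,(D+d(y))}{R}-n\,\eta\,{\bf 1}_{y>0}\;\le\; Dy^*-\frac{nD}{R}\,.
\]
I would then fix an integer $n_0\ge 1$ with $n_0>Ry^*$, so that $\mathcal{L}M\le -\delta$ on $\{n>n_0\}$ for some $\delta>0$, while $\mathcal{L}M\le Dy^*$ holds everywhere.

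Next I would run a Foster--Lyapunov argument on the set $C=\{n\le n_0\}$. Let $\tau=\inf\{t\ge 0:N(t)\le n_0\}$. Using Dynkin's formula for $M$, localized by $\sigma_K=\inf\{t:N(t)\ge K\}$ (which tends to $\infty$ a.s.\ by non-explosion, and is needed only to justify integrability), together with the bound $\mathcal{L}M\le -\delta$ valid on $[0,\tau)$, one gets $\delta\,\EE_{(n,y)}[t\wedge\tau]\le M(n,y)$ for every $t$. Letting $t\to\infty$ and using $M\ge 0$ yields $\EE_{(n,y)}[\tau]\le M(n,y)/\delta<\infty$, hence $\tau<\infty$ almost surely from every initial state $(n,y)\in\NN^*\times[0,y^*]$.

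Finally I would combine this with Lemma \ref{bornemort} by a geometric-trials scheme. Let $t_0>0$ and $p>0$ be given by Lemma \ref{bornemort} for this $n_0$, set $\theta_0=\tau$, and note that on $\{T_0>\theta_0\}$ one has $1\le N(\theta_0)\le n_0$, so the strong Markov property and Lemma \ref{bornemort} give $\proba_{(n,y)}(T_0\le\theta_0+t_0\mid\mathcal{F}_{\theta_0})\ge p$. If extinction has not occurred by time $\theta_k+t_0$, define $\theta_{k+1}=\inf\{t\ge\theta_k+t_0:N(t)\le n_0\}$, which is again finite a.s.\ by the previous paragraph applied from the state $Z(\theta_k+t_0)$. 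At each stage the conditional probability of extinction within the next $t_0$ units of time is at least $p$, so the probability of surviving the first $k$ stages is at most $(1-p)^k\to 0$. Therefore $\proba_{(n,y)}(T_0<\infty)=1$ for every $n$ and every $y\in[0,y^*]$, the case $n=0$ being trivial. The only point requiring care is the integrability/localization in Dynkin's formula, which is routine given the non-explosion of $N$ and the bound $Y\le y^*$.
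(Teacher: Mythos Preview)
Your proof is correct and takes a genuinely different route from the paper's.

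The paper argues by cases. It picks $\tilde y$ with $b(\tilde y)<D+d(\tilde y)$ and an $n_0$ large enough that once $N$ stays above $n_0$ for a while, the nutrient ODE forces $Y$ below $\tilde y$; then $N$ is dominated by a subcritical linear birth--death process and must return to $\{N\le n_0\}$. A somewhat awkward trichotomy on the random times $\tau=\inf\{t:\sup_{s\ge t}N(s)\le n_0\}$ and $\tau'=\inf\{t:\inf_{s\ge t}N(s)>n_0\}$ then reduces everything to infinitely many visits to $\{N\le n_0\}$, after which Lemma \ref{bornemort} and Borel--Cantelli finish.

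Your Lyapunov approach is cleaner and more quantitative: the function $M(n,y)=n/R+y$ is precisely the total ``mass'' (biomass plus substrate) familiar from deterministic chemostat theory, and the cancellation in $\mathcal{L}M$ is exactly the mass-balance identity. It yields not only recurrence of $\{N\le n_0\}$ but a finite expected return time $\EE_{(n,y)}[\tau]\le M(n,y)/\delta$, which the paper's argument does not give. The geometric-trials conclusion is the same in spirit as the paper's Borel--Cantelli step, but more direct.

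Two small remarks. First, your sentence about ``terms proportional to $ny$'' is slightly misleading; there are no such terms, and what actually cancels is $+\,nb(y)/R$ from the birth jump against $-\,nb(y)/R$ from the drift. Second, when $d(0)=\infty$ the localization in Dynkin's formula needs one extra stopping time $\rho_\epsilon=\inf\{t:Y(t)\le\epsilon\}$; since in that case $T_{Y=0}=T_0\ge\tau$, one has $\rho_\epsilon\uparrow T_{Y=0}\ge\tau$ and the passage to the limit is harmless. This is indeed routine, as you say, but worth one line.
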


\begin{proof} Remark that an obvious comparison theorem as used in the previous proof cannot be applied. Indeed the birth and death rates $b(y^*)$ and  $D+ d(y^*)$ could correspond to a supercritical case. The effect of the chemostat through the nutrient is a regulation of the population.

We will exhibit an integer $n_{0}$ such that the population process will spend an infinite amount of time below $n_{0}+1$. 

\medskip
Let $\ty>0$ be such that $b(\ty)<D+d(\ty)$. Note that by
monotonicity, for all $y\in[0,\ty]$ we have   $b(y)<D+d(y)$. 

Let us define 
$n_{0}$  as an integer such that $y_{n_{0}}<\ty$ if some exists or equal to
$[Dy^*/\tilde b(0)]+1$ otherwise. 

Let $\tau$ be the random time defined by
$$
\sup_{t\ge\tau}N(t)\le n_{0}\;.
$$ 
Then, it follows from Lemma \ref{bornemort} and the Markov property that if $\proba_{(n,y)}\big(\tau<\infty\big)>0$, then
$$
\proba_{(n,y)}\big(T_{0}<\infty\,\big|\, \tau<\infty\big)=1\;.
$$

Assume now $\proba_{(n,y)}\big(\tau=\infty\big)=1$. Let  $\tau'$ be the
random time defined by 
$$
\inf_{t\ge\tau'}N(t)> n_{0}\;.
$$
Assume $\proba_{(n,y)}\big(\tau'<\infty\big)>0$. Let $y_{0}(t)$ be the
solution of the differential equation
$$
\frac{dy_{0}}{dt}=D(y^{*}-y_{0}(t))-(n_{0}+1)\tilde b(y_{0}(t))\;,
$$
with initial condition $y_{0}(0)=y^{*}$. Let $t_{1}>0$ be the finite
solution of
$$
y_{0}(t_{1})=\ty\;.
$$

The time $t_{1}$ is finite from the choice of $n_{0}$.
It is easy to verify that for any integer valued measurable function
$n(t)\ge n_{0}+1$, the solution $y(t)$ of the equation 
$$
\frac{dy}{dt}=D(y^{*}-y(t))-n(t)\tilde b(y(t))\;,
$$ 
with initial condition $y(0)\le y^{*}$, satisfies
$$
y(t)\le \ty
$$
for any $t\ge t_{1}$. On the set  $\big\{ \;\tau'<\infty\big\}$,
the process $(N(t), t\geq  
\tau'+t_{1})$ is dominated by a linear birth and
death process with birth rate $b(\ty)$  and death rate $D+d_{0}(\ty)$
(from the monotonicity of the functions). This birth and death chain attains $n_{0}$
almost surely in finite time since $b(\ty)<D+d_{0}(\ty)$, see
\cite{vandoorn}. Hence  on $\big\{ \;\tau'<\infty\big\}$, the process $(N(t), t\geq  
\tau'+t_{1})$  should also attains
$n_{0}$ in finite time. This contradicts our assumption
$\proba_{(n,y)}\big(\tau'<\infty\big)>0$. 

It remains to consider the case $\tau=\tau'=\infty$ almost surely. In this case there
exist two infinite sequences of random times
$$
T_{1}<S_{1}<T_{2}<S_{2}<\cdots
$$ 
such that
$$
N(t)\le n_{0}\quad \mathrm{for}\; t\in[T_{j},S_{j})\;,\quad 
N(t)>n_{0}\quad \mathrm{for}\; t\in[S_{j},T_{j+1})\;.
$$
Since we visit the set $\{N\le n_{0}\}$ infinitely many times, and  at
each visit we have a uniformly positive probability of extinction, it
follows by the Markov property and the Borel Cantelli Lemma that 
$$
\proba_{(n,y)}\big(T_{0}<\infty\,\big|\,\tau=\tau'=\infty\big)=1\;.
$$
\end{proof}

One of our main objectives of this work is to study the processes
up to the moment the population is extinct $Z^{T_0}=(Z(t): t\le T_0)$,
or before the moment of extinction $Z^{T_0^-}=(Z(t): t< T_0)$.
All the statements related to quasi-stationary distributions depend
on $Z^{T_0^-}$.

\section{Existence of Quasi-Stationary Distributions}

A quasi-stationary distribution $\nu$ (with respect to the absorbing time $T_0$) is a probability measure
defined on  $\NN^*\times \RR_+$ that verifies
\begin{equation}
\label{QSDI}
\PP_\nu(Z(t)\in D \, | \, T_0>t)=\nu(D) \,, \; \;
\forall D\in \B(\NN^*\times \RR_+)\,. 
\end{equation}
It is known that  starting from a quasi-stationary distribution, the time of absorption is exponential,
that is $\PP_\nu(T>t)=e^{-\lambda t}$ where $\lambda=\lambda(\nu)>0$. 
Let ${\cal M}_b=\{f:\NN^*\times \RR_+\to \RR \hbox{ bounded and measurable}\}$. 
Equation \eqref{QSDI} can be written as,
\begin{equation}
\label{QSDI1}
\exists\, \lambda>0: \;\, 
\EE_\nu(f(Z(t)),\, T_0>t)=e^{-\lambda t}\int_{0}^\infty f(y)\nu(dy)  \; \;
\forall t>0, f\in {\cal M}_b.
\end{equation}

Denote by 
$$
\kappa_n=\nu(\{n\}\times \RR_+)=\PP_\nu(N(0)=n)\,,
$$
and the probability measure conditioned to have $n$ individuals by
$$
\nu_n(B)=\nu(\{n\}\times B \,| \, \{n\}\times \RR_+)=\PP_\nu(Y(0)\in B \, | \, N(0)=n) \;\;
\forall B\in \B(\RR_+)\,.
$$
Then $\nu(D)=\sum\limits_{n\in \NN^*} \kappa_n \nu_n(D\cap \{n\}\times \RR_+)$,
$\nu_n(\RR_+)=1$ $\; \forall n\in \NN^*$ and $\sum\limits_{n\in \NN^*} \kappa_n=1$.

\medskip

In order that the probability measure $\nu$ is a quasi-stationary distribution, it must satisfy 
the infinitesimal condition deduced from \eqref{QSDI1}, which is given by,
$$
\exists\, \lambda>0: \;\,
\sum\limits_{n\in \NN^*} \kappa_n
\int_0^\infty d\nu_{n}(y) \bigg[{\cal L}f(n,y) - \lambda f(n,y)\bigg] = 0
\;\; \forall  f\in {\cal M}_b.
$$
In terms of the adjoint operator $\mathcal{L}^{\dagger}$ of 
$\mathcal{L}$ defined in \eqref{generator} the equation to be a quasi-stationary distribution is,
\begin{equation}
\label{spectral}
\exists\, \lambda>0: \;\; \mathcal{L}^{\dagger}\nu=-\lambda\,\nu\,.
\end{equation}

\medskip

\begin{theorem} Assume there exists  $0\le \sigma<1$
such that $\limsup\limits_{y\searrow0}y^{\sigma}d(y)<\infty$.
Then  there exists a quasi-stationary distribution.
Moreover there exists at least a quasi-stationary distribution such that $Y$ is supported
in $[0,y^*]$.
\end{theorem}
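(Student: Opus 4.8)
The plan is to realize a quasi-stationary distribution as a fixed point of the conditioned evolution and to invoke the abstract existence result of \cite{quatres}. For $t>0$ and a probability measure $\mu$ on $\NN^*\times\RR_+$ write
$$
\Phi_t(\mu)(\cdot)=\PP_\mu\big(Z(t)\in\cdot \,\big|\, T_0>t\big),
$$
so that \eqref{QSDI} says precisely that $\nu$ is a common fixed point of $\Phi_t$ for every $t>0$. The theorem of \cite{quatres} reduces the existence of such a fixed point to checking three ingredients: (a) a tightness/compactness condition forcing the conditioned laws to remain in a weakly compact convex set of probability measures; (b) a Feller-type continuity of $\mu\mapsto\Phi_t(\mu)$ on that set; and (c) a non-degeneracy statement preventing the conditioning from pushing all the mass onto the absorbing boundary $\{0\}\times\RR_+$. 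Once these hold, a Schauder--Tychonoff argument produces $\nu$ and the exponential absorption $\PP_\nu(T_0>t)=e^{-\lambda t}$ with $\lambda>0$.

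Tightness in the nutrient variable comes for free. By Proposition \ref{defproc} the set $\NN^*\times[0,y^*]$ is invariant, so I would run the entire argument on probability measures carried by $\NN^*\times[0,y^*]$; the coordinate $Y$ then lives in the compact interval $[0,y^*]$, and only the population coordinate $N\in\NN^*$ needs a uniform control. For this I would construct a Lyapunov/test function of the form $V(n,y)=\phi(n)$ with $\phi$ increasing and exploit the self-regulation of the model already used in the proof of almost sure extinction: via the generator \eqref{generator} and the nutrient equation encoded in \eqref{systeme}, a large population depletes the nutrient (the term $\tildeb(Y)\,N$ drives $Y$ down), and once $Y$ is small the birth rate $b(Y)$ is small while $D+d(Y)$ dominates, so the population drifts downward. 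Turning this mechanism into a drift inequality yielding uniform moment bounds on $N(t)$ under the conditioned laws, and hence tightness in $N$, is what provides ingredient (a).

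For ingredient (c) I would combine Lemma \ref{bornemort} with the almost sure extinction theorem: from any bounded population level the process hits $0$ in finite time with uniformly positive probability, which gives the lower bounds on survival probabilities that keep $\lambda$ positive and finite and forbid the conditioned mass from escaping. The continuity (b) follows from the pathwise representation \eqref{systeme} and standard dependence-on-initial-condition estimates for the coupled Poisson/ODE system. Finally, since $\NN^*\times[0,y^*]$ is invariant, the fixed point $\nu$ obtained by restricting the argument to measures supported there is itself supported in $\NN^*\times[0,y^*]$, which establishes the last assertion.

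I expect the main obstacle to be the behaviour near the face $\{y=0\}$, where in the instantaneous-death case $d(0)=\infty$ and the killing rate is unbounded. This is exactly where the hypothesis $\limsup_{y\searrow0}y^{\sigma}d(y)<\infty$ with $0\le\sigma<1$ enters: it guarantees that $d$ is integrable against the natural scale near the origin (heuristically $\int_0 y^{-\sigma}\,dy<\infty$), so that the cumulated killing along trajectories dipping toward $\ty$ small stays controlled and the conditioned semigroup neither loses all its mass nor loses tightness there. The delicate part of the proof is to show that this singular killing near $y=0$ does not destroy the compactness of ingredient (a) nor the continuity of ingredient (b) needed to close the fixed-point argument.
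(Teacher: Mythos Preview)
Your overall strategy---invoke the abstract existence theorem of \cite{quatres} after verifying a Lyapunov/tightness condition and a non-degeneracy (lower survival) condition---matches the paper's. Where you diverge from the paper, and where there is a real gap, is in the Lyapunov function itself.

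You propose $V(n,y)=\phi(n)$ depending only on $n$. Applying the generator \eqref{generator} to such a $V$ gives
\[
\mathcal{L}V(n,y)=n\,b(y)\big(\phi(n+1)-\phi(n)\big)+n\big(D+d(y)\big)\big(\phi(n-1)-\phi(n)\big),
\]
and for this to be $\le -C\,V$ uniformly for large $n$ you would need $b(y)<D+d(y)$ for all $y\in[0,y^*]$. Nothing in the hypotheses guarantees this; the whole point of the model (and of the extinction proof) is that the birth--death chain in $n$ may well be supercritical for $y$ near $y^*$, and regulation occurs only \emph{through} the nutrient. A test function of $n$ alone cannot see that mechanism. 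The paper's device is to take
\[
\varphi_2(n,y)=e^{a(y)\,n},\qquad a(y)=\alpha y+a_0,
\]
so that the $\partial_y$-part of $\mathcal{L}$ contributes the term $-n\,\tildeb(y)\cdot\alpha n\cdot\varphi_2$. Grouped with the birth term this yields the factor $b(y)\big(e^{a(y)}-1-\alpha n/R\big)$, which is negative for all large $n$ uniformly in $y\in[0,y^*]$, and the desired drift inequality $\mathcal{L}\varphi_2\le -C\varphi_2+\Gamma(C)\varphi_1$ follows. In short: the $y$-dependence of the Lyapunov function is not cosmetic; it is what converts the nutrient-consumption term into the dominant negative drift. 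Your heuristic ``large $N$ depletes $Y$, then small $Y$ makes the chain subcritical'' is a two-time-scale argument that does not collapse into a one-step generator inequality for a function of $n$ alone.

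Your reading of the hypothesis $\limsup_{y\searrow0}y^{\sigma}d(y)<\infty$ is essentially right but should be made concrete. In the paper it is used for the \emph{lower} bound on survival (your ingredient (c)), not for tightness: starting from $(1,y_0)$ with no jump on $[0,1]$, the nutrient obeys \eqref{eqy} and satisfies $y(t)\ge ct$, so $\int_0^1 d(y(t))\,dt\le \mathcal{O}(1)\int_0^1 t^{-\sigma}\,dt<\infty$, giving a uniform positive lower bound for $\PP_{(1,y_0)}(N(t)=1,\ 0\le t\le 1)$ and hence $Q\varphi_1\le e^{\mathcal{L}}\varphi_1\le\varphi_1$ with $\varphi_1=\mathbf{1}_{n\ge1}$.
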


\begin{proof} It suffices to show the existence of a quasi-stationary distribution. In fact,
the last part of the statement follows when the existence proof is applied
to the process $Z$ taking values in the invariant set 
$\mathbb{N}\times [0,y^*]$.

\medskip
 
The idea for showing the existence is to use the abstract 
Theorem 4.2 proved in \cite{quatres}. 
We assume $y\in[0,y^{*}]$. We define a function 
$$ 
\varphi_{1}(y,n)=\mathbf{1}_{n\ge 1}\;. 
$$ 
Then a simple computation (it is the same computation as 
for a birth and death process since $\varphi_{1}$ does not depend on $y$) 
leads to 
$$ 
{\cal L}\varphi_{1}(y,n)= 
\left
\{\begin{array}{cc} 
0&\mathrm{if}\;n>1\;,\\ -D-d(y)&\mathrm{if}\;n=1\;. 
\end{array}
\right. 
$$ 
Therefore if $d_{0}$ is bounded above by a constant $d_{0}^*$, then 
$$ 
e^{-t(D+d_{0}^*) } \varphi_{1}\le e^{t{\cal L}} \varphi_{1}\le \varphi_{1}\;. 
$$ 
If $d_{0}$ is not bounded, we will prove the  lower bound
$$
\inf_{n_{0}\ge 1,\,y_{0}\in]0,y^{*}]}\proba_{n_{0},y_{0}}(N(1)\ge
    1)\ge Q\;.
$$ for a constant $Q>0$.   
By the Markov property it suffices to prove that  there exists $Q>0$
such that 
$$ 
\inf_{y_{0}\in[0,y^{*}]}\proba_{1,y_{0}}(N(t)=1,\;\forall\;0\le t \le
1)\ge Q\;.
$$
Since  $N=1$ on the whole time interval $[0,1]$, $y(t)$ satisfies the
differential equation 
\begin{equation}\label{eqy}
\frac{dy}{dt}=D(y^{*}-y)-\tilde b(y)\;,
\end{equation}
with initial condition $y_{0}$.
Since there neither birth, nor death on the time interval $[0,1]$
we get 
$$
\proba_{1,y_{0}}(N(t)=1,\;\forall\;0\le t \le 1)=
e^{-\int_{0}^{1}\big(d(y(t))+b(y(t)\big)\,dt}\;.
$$
Since  $b(y)$ is bounded uniformly in $y$, the above quantity 
does not vanish if 
$$
\int_{0}^{1}d(y(t))\,dt<\infty\;.
$$
It is easy to show that there is a constant $c>0$ such that for any
$y^{*}\ge y_{0}\ge 0$, we have $y(t)\ge ct$ for any $t\in[0,1]$.
Since $d(y)\le \mathcal{O}(1) y^{-\sigma}$ for $y$ small, we get
$$
\int_{0}^{1}d(y(t))\,dt\le \mathcal{O}(1)\int_{0}^{1}y(t)^{-\sigma}\,dt<\infty\;.
$$ 
It now follows immediately that
$$
Q\;\varphi_{1}\le e^{{\cal L}} \varphi_{1}\le \varphi_{1}\;.
$$
 The second function is 
$$ 
\varphi_{2}(y,n)=\mathbf{1}_{n\ge 1}\;e^{a(y)n}\;, 
$$ 
with $a(y) = \alpha y + a_{0}$, 
$\alpha>0$ and $a_{0}>0$. A simple computation for $n>1$ (it is the same 
computation as for a birth and death process since $\varphi_{2}$ 
does not depend on $y$) leads to,  
$$
{\cal L}\varphi_{2}(y,n)= n\,e^{a(y)n}\, \Xi(n,y)\;,
$$
where
$$
\Xi(n,y)= 
 b(y)\left(e^{a(y)}\!-\!1 \!- {\alpha n\over R}\right)\!+
(D\!+\!d(y))\left(e^{-a(y)}\!-\!1\right) \!+\! D \alpha (y^*\!-\!y) 
\!-\! \alpha \eta {\bf 1}_{y>0}\;.
$$ 

Let us show that there exists $A>0$ and $N_{0}$ such that for all $n>N_{0}$, 
$$
\sup_{y\in [0,y^*]} \Xi(n,y) \leq - A.
$$
Define $\zeta(y)= (D+d(y))\left(e^{-a(y)}\!-\!1\right) \!+\! D \alpha (y^*-y) 
-\alpha \eta {\bf 1}_{y>0}$. 
We choose $a_{0}$ and $\alpha$ such that $D \left(e^{-a_{0}} -1\right) +\! D \alpha y^*<0$. Then  $\zeta(0)<0$. It follows  that $\forall y\in[0, y^*]$, $\zeta(y)<- A$ for some $A>0$.

Consider $\hat N_{0}$ such that $e^{a(y^*)}-1 - {\alpha \hat N_{0}\over R} <0$. We still have for $n>\hat N_{0}$ that $e^{a(y^*)}-1 - {\alpha n\over R} <0$.

Then  for  $n>\hat N_{0}$, we get 
$ \Xi(n,y)\leq -A $.
Therefore for any $C>0$ there 
exists $N(C)$ such that for any $n>N(C)$, 
$$ 
{\cal L}\varphi_{2}(y,n)\le -C\,\varphi_{2}(y,n)\;. 
$$

Therefore, for any $C>0$ there exists $\Gamma(C)>0$ (finite) such that 
$$ 
{\cal L}\varphi_{2}\le -C\,\;\varphi_{2} +\Gamma(C)\;\varphi_{1}\;. 
$$ 
(the 
estimate for $n=1$ is by direct computation taking $\Gamma(C)$
adequately large enough).  
Hence,
$$ 
e^{t{\cal L}} \varphi_{2}\le 
e^{-tC}\;\varphi_{2}+\frac{\Gamma(C)}{C}\varphi_{1}\;. 
$$ 
In the case where $d$ is bounded above by $d^{*}<\infty$, we now
choose $C>D+d^{*}$  
and $t=1$ and apply Theorem 4.2 in \cite{quatres}. 

In the case $d$ unbounded, we choose $C>-\log Q$ and apply Theorem 4.2 in
\cite{quatres}.  

\end{proof}


\section{Properties of the Quasi-Stationary Distributions}

\begin{proposition} 
\label{yaglom} 
Any quasi-limiting distribution has support in $\mathbb{N}^*\times[0,y^*]$. 
\end{proposition}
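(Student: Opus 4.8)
The plan is to exploit two facts established earlier: the forward invariance of $\NN\times[0,y^*]$ (Proposition \ref{defproc}) and the deterministic rate at which the nutrient is pushed back into this set. Write $\nu$ as a quasi-limiting distribution, i.e. $\nu(\cdot)=\lim_{t\to\infty}\PP_\mu\big(Z(t)\in\cdot\,\big|\,T_0>t\big)$ for some initial law $\mu$ on $\NN^*\times\RR_+$. The inclusion of the support in $\NN^*\times\RR_+$ (that is, $N\ge 1$) is immediate, since conditioning on $\{T_0>t\}$ forces $N(t)\ge 1$ and $\{0\}\times\RR_+$ is absorbing, so every conditioned law and hence their limit charge only $\{n\ge1\}$. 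The whole content is therefore to prove $\nu\big(\NN^*\times(y^*,\infty)\big)=0$.

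First I would record the deterministic push. If $Y(s)>y^*$ and $N(s)\ge1$, then, using that $\tildeb$ is nondecreasing on $(0,\infty)$ because $b$ is increasing,
$$
\frac{dY}{ds}=D(y^*-Y(s))-\tildeb(Y(s))\,N(s)\le -\tildeb(Y(s))\le -\tildeb(y^*)<0 .
$$
Hence while it stays above $y^*$ the nutrient decreases at rate at least $\tildeb(y^*)$, and by the invariance of $[0,y^*]$ it never returns above $y^*$ afterwards. On the event $\{T_0>t\}$ we have $N(s)\ge1$ for every $s\le t$, so the bound applies throughout $[0,t]$; consequently
$$
\{T_0>t\}\cap\{Y(t)>y^*\}\subseteq\{Y(0)>y^*+\tildeb(y^*)\,t\}.
$$
This is the key mechanism: a conditioned trajectory can still sit above $y^*$ at time $t$ only if it started at least $\tildeb(y^*)\,t$ above $y^*$. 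From here the conclusion is immediate when $\mu$ has bounded support in the nutrient variable (in particular for a Dirac initial condition, the usual Yaglom setting): for $t$ larger than $(M-y^*)/\tildeb(y^*)$, where $M$ bounds the nutrient support of $\mu$, the right-hand set is $\mu$-null, so $\PP_\mu(Y(t)>y^*,\,T_0>t)=0$ and thus $\PP_\mu(Y(t)>y^*\mid T_0>t)=0$. Since $\{y>y^*\}$ is open, weak convergence gives $\nu(\{y>y^*\})\le\liminf_t\PP_\mu(Y(t)>y^*\mid T_0>t)=0$.

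The main obstacle is the general initial law, where both the numerator $\PP_\mu(Y(t)>y^*,T_0>t)\le\mu(\{y>y^*+\tildeb(y^*)t\})$ and the denominator $\PP_\mu(T_0>t)$ tend to $0$, so one must show the numerator wins. For this I would use that $\nu$, being quasi-limiting, is itself a quasi-stationary distribution with some rate $\lambda>0$, whence the self-consistent identity $\PP_\nu(Y(s)>y^*,T_0>s)=\nu(\{y>y^*\})\,e^{-\lambda s}$ combined with the inclusion above yields, for all $s$,
$$
\nu(\{y>y^*\})\le e^{\lambda s}\,\nu\big(\{y>y^*+\tildeb(y^*)\,s\}\big).
$$
It then suffices to control the right tail of $\nu$: the exponential Lyapunov function $\varphi_{2}(y,n)=\mathbf{1}_{n\ge1}e^{a(y)n}$ from the existence proof provides a finite exponential moment $\int e^{\alpha y}\,d\nu<\infty$, forcing $\nu(\{y>r\})$ to decay exponentially in $r$, so the right-hand side vanishes as $s\to\infty$ and $\nu(\{y>y^*\})=0$. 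Matching the admissible decay rate $\alpha$ of this moment bound (constrained by $\alpha y^*<1$ in the construction) against $\lambda/\tildeb(y^*)$ is the delicate quantitative point, and is exactly where the estimates underlying the existence theorem must be invoked.
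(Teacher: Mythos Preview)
Your argument for a Dirac initial condition (or, more generally, an initial law with bounded nutrient support) is correct and is essentially the paper's own proof. In the paper a quasi-limiting distribution is defined only as a limit starting from a single point $(n_0,y_0)$, so that case is all that is required; your linear comparison $dY/dt\le -\tildeb(y^*)$ on $\{Y>y^*,\ N\ge1\}$ and the paper's ODE comparison with $v'=D(y^*-v)-\tildeb(y^*)$ are two versions of the same mechanism and both produce a finite time after which $\{T_0>t\}\subset\{Y(t)\le y^*\}$.

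Your attempted extension to an arbitrary initial law $\mu$ is not needed for the stated proposition, and as you yourself flag, it is not complete. The step where you invoke $\int e^{\alpha y}\,d\nu<\infty$ from the Lyapunov function $\varphi_2$ is not justified for a generic quasi-limiting $\nu$: the estimate $e^{t\mathcal L}\varphi_2\le e^{-Ct}\varphi_2+\Gamma(C)C^{-1}\varphi_1$ feeds into the fixed-point construction of \emph{some} QSD, but it does not give an a priori moment bound for every QSD. Moreover, read backwards your key inequality says $\nu(\{y>y^*+c\})\ge e^{-\lambda c/\tildeb(y^*)}\,\nu(\{y>y^*\})$, which is only a \emph{lower} bound on the tail; turning it into a contradiction requires an independent upper bound with exponential rate strictly exceeding $\lambda/\tildeb(y^*)$, and nothing in the existence proof guarantees that $\alpha$ can be taken that large. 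So for the paper's proposition you should simply stop after the bounded-support case.
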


\begin{proof} 
We recall that a quasi-limiting distribution $\nu$ is a probability
measure on $\NN^*\times \RR_+$ such that for some initial point  
$(n_{0}, y_{0})\in \NN^*\times \mathbb{R}_{+}$ 
$$
 \nu(A)= \lim\limits_{t\to \infty}
\PP_{(n_0, y_0)}(Z(t)\in A \, | \, T_0>t) \;,\; \forall A\in \B(\NN^*\times \RR_{+}). 
$$
If $y_{0}\in [0,y^*]$,  the assertion follows from Proposition \ref{defproc}. Let us now assume that $y_{0}>y^*$. 
We introduce the function $t\to v(t)$ defined by 
$$
{dv\over dt}(t) = D(y^* -v) - \tilde b(y^*)\ ; v(0)=y_{0}.
$$ 
Let $\tau_{0}(y_{0})$ be 
defined by $v(\tau_{0}(y_{0})) = y^*$. For $t\leq T_{0}\wedge \tau_{0}(y_{0})$, we have $Y(t)\leq v(t)$.
It follows from Proposition \ref{defproc}  that 
$\mathbb{P}_{(n_0,y_0)}(Y(t)>y^*| T_{0}>t) = 0 \; ,\,\forall t\geq \tau_{0}(y_{0})$, 
which concludes the proof. 
\end{proof}

\bigskip 

\medskip

\begin{theorem} 
\label{abscontqsd}
For all $n\in \NN^*$  and any quasi-stationary distribution,  the probability measure $\nu_n$ 
is absolutely continuous with respect to the Lebesgue measure, 
with  $C^{\infty}$-density   on  the set 
$\,\RR_+\setminus \{0,y_n\}$. 
\end{theorem}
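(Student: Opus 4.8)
The plan is to turn the spectral identity \eqref{spectral}, $\mathcal{L}^{\dagger}\nu=-\lambda\nu$, into a coupled hierarchy of one–dimensional transport equations for the finite measures $\mu_n:=\kappa_n\,\nu_n$, and then to run a regularity bootstrap in the $y$–variable. Writing \eqref{spectral} in weak form and testing against functions of the special shape $f(m,y)=\phi(y)\,\mathbf{1}_{m=n}$ with $\phi\in C^\infty_c$, the birth/death part of $\mathcal{L}$ redistributes mass between consecutive layers $n-1,n,n+1$, while the transport part $G_n(y)\,\partial_y$ can be integrated by parts in $y$. Collecting the coefficient of $\phi$ yields, in the sense of distributions on $\RR_+$,
\begin{equation*}
\frac{d}{dy}\big(G_n(y)\,\mu_n\big)=\big[\lambda-n(b(y)+D+d(y))\big]\,\mu_n+(n-1)\,b(y)\,\mu_{n-1}+(n+1)\big(D+d(y)\big)\,\mu_{n+1}\,,
\end{equation*}
with the convention $\mu_0=0$. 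All coefficients are continuous on $(0,\infty)$; the only degeneracies are at $y=y_n$, where $G_n$ vanishes by Lemma \ref{roots1}, and at $y=0$, where $d$ may be infinite and where $\tildeb$ jumps.

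First I would establish absolute continuity together with a crude first regularity. Fix a point of $\RR_+\setminus\{0,y_n\}$ and a compact neighbourhood $I$ of it on which $G_n$ is continuous and bounded away from $0$. On $I$ the right–hand side above is a finite signed measure $\rho_n$, since $\mu_{n-1},\mu_n,\mu_{n+1}$ are finite and $b,D,d$ are bounded there. Thus the measure $G_n\mu_n$ has distributional derivative equal to the measure $\rho_n$, and a distribution whose derivative is a measure is a function of locally bounded variation; hence $G_n\mu_n$ is absolutely continuous on $I$ with a $BV$ density. Dividing by the continuous non-vanishing $G_n$ shows that $\mu_n$ itself is absolutely continuous on $\RR_+\setminus\{0,y_n\}$, with a locally bounded density $p_n$. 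In particular $\mu_n$ carries no atom away from $\{0,y_n\}$, since an atom at an interior point $y_0$ would force a term $\propto\delta'_{y_0}$ on the left, which cannot be balanced by a measure on the right.

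Then I would bootstrap. On any interval avoiding $\{0\}\cup\moche$ the densities $p_{n-1},p_n,p_{n+1}$ all exist and are bounded by the previous step, so the density $r_n$ of $\rho_n$ is bounded; integrating $\tfrac{d}{dy}(G_np_n)=r_n$ gives $G_np_n$ Lipschitz, hence $p_n$ Lipschitz. Feeding this back makes $r_n$ Lipschitz, so $p_n\in C^1$, and so on: if every $p_m$ is $C^k$ then $r_n$ is $C^k$, whence $G_np_n$ is $C^{k+1}$ and $p_n=(G_np_n)/G_n$ is $C^{k+1}$. Running the step simultaneously over all layers promotes the whole family from $C^k$ to $C^{k+1}$, so $p_n\in C^\infty$ there, the regularity being inherited from that of the coefficients $b$ and $d$.

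The main obstacle is precisely the exceptional set. At $y=y_n$ the equation for $\mu_n$ is a first–order ODE with a regular singular point, the coefficient $[\lambda-n(b+D+d)]/G_n$ having a simple pole; the density can therefore genuinely fail to be smooth there, which is why $y_n$ must be excluded, and $y=0$ is excluded because $d(0)$ may be infinite and $\tildeb$ is discontinuous. The delicate part is to upgrade the bootstrap across the neighbouring degenerate points $y_{n\pm1}\in\moche$, where a priori the coupling could transmit a loss of regularity from $\mu_{n\pm1}$ into $\mu_n$; I expect this to require a finer, one–sided analysis of the transport near $y_{n\pm1}$, exploiting the transversality $G_n(y_{n\pm1})\neq 0$ together with the support and invariance information of Corollary \ref{masinva}, so that only the intrinsic degeneracy at $y_n$ survives.
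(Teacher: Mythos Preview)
Your approach is the paper's: derive the layer equations \eqref{eqnu} from \eqref{spectral} and bootstrap. Your BV argument for absolute continuity away from $\{0,y_n\}$ is a clean variant of the paper's test-function construction in Lemma~5.5, and your bootstrap is the ``recursive argument'' of Lemma~5.3; both you and the paper get $C^{\infty}$ only on $(\moche\cup\{0\})^{c}$ at that stage.

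The step you leave open---pushing $C^{\infty}$ across the points $y_{j}$ with $j\neq n$---is genuine, and the paper's missing ingredient is the explicit formula in the proof of Lemma~5.6. Near $y_{m}$ the \emph{unique integrable} solution of $(G_{m}u_{m})'=f_{m}G_{m}u_{m}+h_{m}$ is
\[
u_{m}(y)=\frac{1}{G_{m}(y)}\,e^{\int_{y_{m}}^{y}f_{m}}\int_{y_{m}}^{y}e^{-\int_{y_{m}}^{s}f_{m}}\,h_{m}(s)\,ds\,,
\]
and because $G_{m}$ has a \emph{simple} zero at $y_{m}$, Hadamard's division lemma makes $u_{m}$ exactly as regular at $y_{m}$ as $h_{m}$. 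Since $h_{m}$ involves only $u_{m\pm1}$ and $y_{m}\neq y_{m\pm1}$, the regularity of $u_{m}$ at its own singular point is controlled by the regularity of its neighbours \emph{away} from theirs. This lets the bootstrap run simultaneously over all layers: from ``every $u_{m}$ is $C^{k}$ on $(0,\infty)\setminus\{y_{m}\}$'' one gets $h_{m}\in C^{k}$ near $y_{m}$, hence $u_{m}\in C^{k}$ at $y_{m}$, hence the right-hand side for every layer is globally $C^{k}$, hence every $u_{m}$ is $C^{k+1}$ away from $y_{m}$, and so on. Your intuition about transversality $G_{n}(y_{n\pm1})\neq0$ is the right one; the one-sided analysis and the invariance information from Corollary~\ref{masinva} are not what is needed. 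Note also that the paper separately excludes an atom of $\nu_{n}$ at $y_{n}$ itself (Lemma~5.6), via a second test-function sequence and the same formula; you do not address this, though on the narrower reading of the theorem (absolute continuity only on $\RR_{+}\setminus\{0,y_{n}\}$) it is not required.
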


The proof of this theorem is obtained from the following lemmas. Recall that the set $\moche$ has been defined in \eqref{moche}.

\begin{lemma} 
\label{AC} 
For all $n\in \NN^*$, the measure 
$\nu_{n}$ satisfies 
\begin{equation}
\label{lebdec}
d\nu_{n}=c^{n}_{0}\;
\delta_{0}+  \sum_{j\in \NN^*} \bigg(c^{n}_{j}\;
\delta_{y_{j}}+u_{n}(y)\, dy\bigg)
\end{equation}
with $c^{n}_{j}\ge 0$ for $j\in \NN$ and $u_{n}$ is the density of the
absolutely continuous part of $\nu_n$ (so it is a
non-negative integrable function) and it is a $C^{\infty}$ 
function outside $\moche\cup \{0\}$.
\end{lemma}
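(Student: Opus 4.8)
The plan is to convert the eigenmeasure equation \eqref{spectral} into an explicit coupled system of first--order transport equations for the measures $\mu_n:=\kappa_n\,\nu_n$ on $\RR_+$, and then to exploit that a finite measure whose distributional derivative is governed by a \emph{smooth} linear ordinary differential equation must be absolutely continuous with a smooth density, so that all singular behaviour is forced onto the points where this ODE degenerates. Concretely, testing $\mathcal L^{\dagger}\nu=-\lambda\nu$ against functions $f(m,y)=g(y)\,\mathbf 1_{m=n}$ and integrating by parts in $y$, one obtains for every $n\ge 1$ the distributional identity on $(0,\infty)$
\[
\frac{d}{dy}\big(G_n(y)\,\mu_n\big)=\big[\lambda-n\big(b(y)+D+d(y)\big)\big]\mu_n+(n-1)\,b(y)\,\mu_{n-1}+(n+1)\,\big(D+d(y)\big)\,\mu_{n+1},
\]
with the conventions $\mu_0\equiv 0$ and $G_n(y)=D(y^*-y)-n\,\tildeb(y)$ as in \eqref{root}; the boundary terms produced by the integration by parts at $0$ and at $y^*$ must be retained and will feed the atom at $0$. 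The right--hand side is a finite signed measure, and the left--hand side is the distributional derivative of the flux $\phi_n:=G_n\,\mu_n$.

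The heart of the argument is local regularity off $\moche\cup\{0\}$, with $\moche$ the countable set \eqref{moche}. Fix a compact interval $I\subset(0,\infty)$ with $I\cap\moche=\emptyset$. By Lemma \ref{roots1} every $G_j$ is then bounded away from $0$ on $I$ (and $|G_j|\to\infty$ as $j\to\infty$), so on $I$ the system rewrites as $\phi_n'=\sum_j a_{nj}(y)\,\phi_j$ with coefficients $a_{nj}=(\text{smooth})/G_j$ that are smooth on $I$ and satisfy bounds uniform in $n$. I would then prove that a vector of finite measures $(\phi_n)$ solving such a smooth linear system is necessarily absolutely continuous with smooth density: if $S(y)$ denotes the fundamental solution of the associated adjoint system, the Leibniz rule gives $(S\phi)'=0$ in the sense of distributions, so $S\phi$ is a constant multiple of Lebesgue measure, whence $\phi=S^{-1}c\,dy$ is absolutely continuous with a density as smooth as the coefficients, i.e. $C^{\infty}$ under the smoothness of $b$ and $d$. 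Dividing by the nonvanishing smooth $G_n$ shows that $\mu_n$, hence $\nu_n$, has a $C^{\infty}$ density on $I$; covering $\RR_+\setminus(\moche\cup\{0\})$ by such intervals gives the asserted smoothness of the absolutely continuous part.

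This already yields \eqref{lebdec}: the previous step shows that the singular part of $\mu_n$ is concentrated on the closed \emph{countable} set $\moche\cup\{0\}$, and a measure supported on a countable set is purely atomic, so it must be of the form $c^n_0\,\delta_0+\sum_{j}c^n_j\,\delta_{y_j}$ (in particular no singular continuous part can survive). The weights $c^n_j$ and the density $u_n$ are nonnegative because $\nu$, hence each $\mu_n$, is a nonnegative measure, and $u_n$ is integrable since $\nu_n$ is a probability measure. The precise mechanism by which an atom forms at a given $y_j$ (and at $0$) can be read off the scalar integrating--factor representation of the transport equation, whose integrating factor degenerates exactly at the zero $y_j$ of $G_j$.

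The main obstacle I expect is making the fundamental--solution argument rigorous for the \emph{infinite} tridiagonal system uniformly in $n$: one must work in a weighted sequence space in which $y\mapsto A(y)=(a_{nj}(y))$ is a bounded continuous operator, so that $S(y)$ and $S^{-1}(y)$ exist and the distributional identity $(S\phi)'=0$ is legitimate. Here the exponential control in $n$ furnished by the Lyapunov function $\varphi_{2}=e^{a(y)n}$ from the existence proof should provide the required summability of the $\mu_n$. A second delicate point is the boundary $y=0$, where $\tildeb$ jumps, $d$ may be infinite (controlled only through $\limsup_{y\searrow 0}y^{\sigma}d(y)<\infty$), and, when $\eta=0$, infinitely many roots $y_j$ accumulate; this is precisely where the atom $c^n_0\,\delta_0$ has to be analysed on its own.
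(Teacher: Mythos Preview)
Your derivation of the tridiagonal system for the fluxes $\phi_n=G_n\mu_n$ and your identification of the singular set are correct, and the conclusion you draw from ``singular part supported on a countable set implies purely atomic'' is exactly what is needed. Where you diverge from the paper is in the mechanism for obtaining $C^\infty$ regularity off $\moche\cup\{0\}$.

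The paper does not attempt to solve the coupled system via a fundamental solution. It uses a straightforward regularity bootstrap, applied to \emph{all} $n$ in parallel, one derivative at a time. Step~0: each $\mu_n$ is a finite measure, so the right-hand side of the equation for $\phi_n$ is a measure; hence $\phi_n$ is (locally) a function of bounded variation, and on any compact interval $I\subset(0,\infty)\setminus\moche$ where $G_n$ does not vanish, $\mu_n=\phi_n/G_n$ is itself a BV function. Step~1: now that every $\mu_m$ has a BV (hence $L^\infty_{\mathrm{loc}}$) density on $I$, the right-hand side of the equation for $\phi_n$ is a bounded function, so $\phi_n\in W^{1,\infty}(I)$ and $\mu_n$ is Lipschitz. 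Iterating gives $C^\infty$. The point is that each pass of the bootstrap, for a fixed $n$, uses only the three measures $\mu_{n-1},\mu_n,\mu_{n+1}$ at their \emph{current} level of regularity; no simultaneous solution of the infinite system, no fundamental matrix, and no weighted sequence space is required. This completely sidesteps the obstacle you flag in your last two paragraphs.

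Your fundamental-solution route is not wrong in principle, but it manufactures a genuine technical difficulty (existence and invertibility of $S(y)$ as a bounded operator on a weighted $\ell^1$, uniform in $y\in I$) that the paper's argument never encounters. If you want to keep your approach, you would indeed need the exponential-in-$n$ control you mention; but the simpler observation is that tridiagonality lets you bootstrap componentwise without ever summing over $n$.
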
 

\begin{proof} The measures $\nu_{n}$ satisfy in the sense of 
distributions the differential equations 
\begin{eqnarray}
\nonumber 
&{}& \partial_{y}\left(\left(D(y^*-y)-n\,\tilde b(y)\right)\;\nu_{n}\right)\\
\label{eqnu}
&{}& \!=\!b(y)n\,\nu_{n-1}\!+ \!(D\!+\!d(y))n\,\nu_{n+1} 
\!-\!(b(y)\!+\!D\!+\!d(y))n\,\nu_{n}\!+\!\lambda \nu_{n}. 
\end{eqnarray} 
Since the right hand side is a measure, we 
conclude by a recursive argument that the measures 
$\nu_{n}$ have a $C^{\infty}$ density on  $(\moche\cup \{0\})^c$. 
This also shows that these measures have no singular part, and the 
Lebesgue decomposition theorem gives relation \eqref{lebdec}. 
\end{proof}

\medskip

\begin{lemma} 
\label{lemme5.4}
Let $I$ be an open interval included in $(\moche\cup \{0\})^c$. If there 
exists $n\in \NN^*$ such that $\nu_{n}(I)=0$, then $\nu_{j}(I)=0$ 
for all $j\in \NN^*$. 
\end{lemma}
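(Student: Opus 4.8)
The plan is to work entirely inside $I$, where everything is smooth. By Lemma \ref{AC}, on $(\moche\cup\{0\})^c$ each measure $\nu_j$ has a nonnegative $C^\infty$ density and no singular part; since $I\subset(\moche\cup\{0\})^c$, we may write $d\nu_j=u_j(y)\,dy$ on $I$ with $u_j\ge 0$ and $u_j\in C^\infty(I)$. Because each $u_j$ is continuous and nonnegative, the hypothesis $\nu_n(I)=\int_I u_n=0$ is equivalent to $u_n\equiv 0$ on $I$, and the desired conclusion $\nu_j(I)=0$ is equivalent to $u_j\equiv 0$ on $I$. So the statement reduces to a propagation claim for the densities.

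First I would rewrite the distributional system \eqref{eqnu} on $I$. Since all measures appearing are absolutely continuous there with smooth densities, \eqref{eqnu} becomes the classical pointwise identity, for every $y\in I$ and every $n\in\NN^*$,
$$
\frac{d}{dy}\big(G_n(y)\,u_n(y)\big)=n\big(b(y)\,u_{n-1}(y)+(D+d(y))\,u_{n+1}(y)-(b(y)+D+d(y))\,u_n(y)\big)+\lambda\,u_n(y),
$$
where $G_n(y)=D(y^*-y)-n\,\tildeb(y)$ is the coefficient from \eqref{root}, and with the convention $u_0\equiv 0$ (the quasi-stationary distribution charges only $\NN^*$, so $\nu_0=0$).

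The key step is the following observation. Suppose $u_n\equiv 0$ on $I$ for some $n\in\NN^*$. Then the left-hand side above vanishes identically, and the two terms on the right involving $u_n$ vanish as well, leaving
$$
0=n\big(b(y)\,u_{n-1}(y)+(D+d(y))\,u_{n+1}(y)\big)\qquad\text{for all }y\in I.
$$
Here is the crucial point: \emph{both summands are nonnegative}, since $n\ge 1$, the densities $u_{n\pm 1}$ are nonnegative, $b(y)\ge 0$, and $D+d(y)>0$. Hence each summand vanishes identically on $I$. Because $0\notin I$, every $y\in I$ satisfies $y>0$, so $b(y)>0$; together with $D+d(y)>0$ this forces $u_{n-1}\equiv 0$ and $u_{n+1}\equiv 0$ on $I$ (for $n=1$ the $u_{n-1}=u_0$ term is already zero, and one only gains $u_2\equiv 0$).

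Finally I would propagate this in both directions by induction on the index. Starting from the $n$ furnished by the hypothesis, iterating the key step upward gives $u_j\equiv 0$ for all $j\ge n$, while iterating it downward gives $u_j\equiv 0$ for all $1\le j\le n$, the descent terminating exactly at $j=1$ because the $u_0=0$ term carries no information. Combining, $u_j\equiv 0$ on $I$, i.e.\ $\nu_j(I)=0$, for every $j\in\NN^*$. The only genuinely substantive ingredient is the nonnegativity argument that turns the vanishing of $u_n$ into a sum of nonnegative terms equal to zero; the coercivity facts $b(y)>0$ on $I$ and $D+d(y)>0$, together with the smoothness from Lemma \ref{AC}, are what make the step rigorous, and everything else is bookkeeping.
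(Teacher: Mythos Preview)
Your proof is correct and follows essentially the same approach as the paper's own proof: use the smoothness from Lemma \ref{AC} to pass to densities on $I$, plug $u_n\equiv 0$ into the differential relation \eqref{eqnu}, and observe that the surviving right-hand side is a sum of nonnegative terms, forcing $u_{n\pm1}\equiv 0$ on $I$; then iterate. The paper states this in two lines, while you have written out the positivity argument and the handling of the boundary case $n=1$ explicitly, but the substance is identical.
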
 

\begin{proof}
Assume that there is an interval $I$ not intersecting $\moche\cup \{0\}$ 
such that for some integer $n\in \NN^*$,
$\nu_{n}(I)=0$ vanishes on $I$. From \eqref{eqnu} we deduce that
$\nu_{n+1}$ and $\nu_{n-1}$ also vanish at $I$. Therefore, we conclude recursively 
that $\nu_{j}(I)=0$ for all $j\in \NN^*$. 
\end{proof}

\medskip

\begin{lemma} 
The probability measure $\nu_{n}$ is absolutely continuous on $\RR_+\setminus \{0,y_n\}$ 
and its density is bounded on any compact set contained in $\RR_+\setminus \{0,y_n\}$. 
\end{lemma}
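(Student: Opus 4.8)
The plan is to analyze $\nu_n$ locally near each point $y_j\in\moche$ with $j\ne n$ and to show that the atom $c^n_j$ appearing in the decomposition \eqref{lebdec} must vanish there. The boundedness of the density then falls out of the same local computation, while the $C^\infty$ (hence locally bounded) behavior of $u_n$ away from $\moche\cup\{0\}$ is already provided by Lemma \ref{AC}. Thus the whole statement reduces to ruling out the atoms of $\nu_n$ at the points $y_j$, $j\ne n$, and controlling $u_n$ as one approaches those points.

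First I would fix $j\ne n$ and pick an open interval $I\ni y_j$ so small that $I$ contains no other point of $\moche\cup\{0\}$. On such an $I$ the coefficient $G_n(y)=D(y^*-y)-n\,\tilde b(y)$ is continuous and, crucially, nonvanishing: since $y_j>0$ one has $\tilde b(y_j)>0$, and therefore $G_n(y_j)=G_n(y_j)-G_j(y_j)=(j-n)\,\tilde b(y_j)\ne 0$, so after shrinking $I$ we may assume $|G_n|\ge\delta>0$ on $I$. Because $I$ is bounded away from $0$, the death rate $d$ is finite and continuous on $I$, whence the right-hand side $R_n$ of the distributional identity \eqref{eqnu} is a finite signed measure on $I$ (its possible atoms at $y_j$, inherited from those of $\nu_{n\pm1}$ and $\nu_n$, are harmless).

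The core is a distribution-theoretic step applied to $\mu_n:=G_n\,\nu_n$, a finite measure on $I$ equal to $c^n_j\,G_n(y_j)\,\delta_{y_j}+G_n u_n\,dy$, which satisfies $\mu_n'=R_n$ in the sense of distributions. A measure whose distributional derivative is again a measure cannot charge a point: the derivative of the atomic part $c^n_j\,G_n(y_j)\,\delta_{y_j}$ is $c^n_j\,G_n(y_j)\,\delta'_{y_j}$, a distribution of order one (not a measure), whereas the derivative of the absolutely continuous part $G_n u_n\,dy$, being the distributional derivative of an $L^1$ function, carries no $\delta'_{y_j}$ component. Since $\mu_n'=R_n$ is a genuine measure and so free of any $\delta'_{y_j}$, the coefficient $c^n_j\,G_n(y_j)$ must vanish; as $G_n(y_j)\ne 0$ this forces $c^n_j=0$. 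Running this over all $j\ne n$ shows that on $\RR_+\setminus\{0,y_n\}$ the measure $\nu_n$ has no atoms, and since by Lemma \ref{AC} it has no singular continuous part either, it is absolutely continuous there. The genuinely delicate point is precisely the assertion that the derivative of an $L^1$ density produces no $\delta'_{y_j}$; I would make it rigorous by a primitive argument, since if $(G_n u_n\,dy)'$ contained $a\,\delta'_{y_j}$ with $a\ne 0$, integrating once would identify the purely atomic measure $a\,\delta_{y_j}$ with an absolutely continuous one, a contradiction.

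For the boundedness claim I would reuse the same relation once the atoms are eliminated: on $I$ we now have $\mu_n=G_n u_n\,dy$ with distributional derivative the finite measure $R_n$, so $G_n u_n$ admits a representative of bounded variation, differing by a constant from the distribution function $F_n(y):=R_n((a,y])$ with $a:=\inf I$. Such a function is bounded on compact subintervals, and dividing by $G_n$, which is bounded below by $\delta$ on $I$, shows $u_n$ is bounded near $y_j$. Combining this with the local boundedness of $u_n$ on $(\moche\cup\{0\})^c$ from Lemma \ref{AC}, and noting that any compact $K\subset\RR_+\setminus\{0,y_n\}$ stays away from $0$ and hence meets only finitely many points of $\moche$ (which accumulates only at $0$), one obtains boundedness of the density on every such $K$.
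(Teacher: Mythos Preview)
Your proposal is correct and follows essentially the same approach as the paper: the paper tests \eqref{eqnu} against an explicit sequence $(f_q)$ with $f'_q(y_j)=1$, $\sup|f_q|\to 0$, $\sup|f'_q|\le 1$, which is precisely the hands-on way of extracting the $\delta'_{y_j}$ coefficient and showing it must vanish because the right-hand side is a measure --- exactly your abstract statement that a measure whose distributional derivative is again a measure has no atoms. Your BV argument for local boundedness near $y_j$ makes explicit what the paper dispatches with ``using again equation \eqref{eqnu}, we deduce easily''.
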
 

\begin{proof} 
Let us show that if $j\neq n$, $\nu_{n}$ cannot have a Dirac 
mass in $y_{j}$. We will do it by contradiction, so assume it does. 
Let $f$ be a $C^{\infty}$ function with compact support 
containing $y_{j}$ and such that its support does not contain any other point
of $S\cup \{0\}$ except $y_{j}$. By using formula \eqref{lebdec} we get, 
\begin{eqnarray} 
\nonumber &{}& 
-f'(y_{j})\left(\!D(y^*\!-\!y_{j})- n\,\tilde b(y)\right) -
\int \!\!f'(y) \left(D(y^*\!-\!y)-n\,\tilde b(y)\right)\psi_{n}(y)dy\\ 
\nonumber 
&{}& =\int 
f(y)\;\big( b(y)\,n\,d\nu_{n-1}(y)+ (D+d(y))\,n\,d\nu_{n+1}(y)\big)\\ 
\label{eqfamily} 
&{}&\;\; +\int f(y)\,\big(\lambda -(b(y)+D+d(y))\,n\big)\,d\nu_{n}(y)\;. 
\end{eqnarray} 

It is not difficult 
to construct a sequence of functions $(f_{q}: q\in \NN^*)$ contained in $C^{\infty}$ 
with support in a fixed 
small enough neighborhood of $y_{j}$ and such that, 
$$ 
f'_{q}(y_{j})\!=\!1; \;
\lim_{q\to\infty}\sup_{y\in \RR_+}|f_{q}(y)|\!=\!0 ; \; \sup_{y\in \RR_+} 
|f'_{q}(y)|\!\leq \!1 \ \forall q ; \;
\lim_{q\to\infty} |f'_{q}(y)|\! =\! 0\; \forall y\neq y_{j}. 
$$ 
This leads to a contradiction, when 
we take $f=f_{q}$ in \eqref{eqfamily} and make $q$ tend to infinity.

\medskip

Now, using again equation \eqref{eqnu}, we deduce easily the boundedness of the 
density of $\nu_{n}$ outside a neighborhood of $\{y_{n},0\}$. 
\end{proof}

\begin{lemma} 
\label{56}
The probability measure $\nu_{n}$ cannot have a Dirac mass in $y_{n}$. 
\end{lemma}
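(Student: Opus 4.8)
The plan is to exploit the single feature that distinguishes $y_n$ from the points handled in the preceding lemma, namely that $y_n$ is exactly the zero of the transport coefficient $G_n(y)=D(y^*-y)-n\,\tilde b(y)$ of the generator \eqref{generator}, with $G_n$ strictly decreasing so that $G_n'(y_n)<0$. The detection argument of the preceding lemma worked because $G_j(y_n)\neq 0$ for $j\neq n$, and it degenerates here; so a genuinely new idea is needed. By \eqref{lebdec} and the preceding lemma the only candidate atom of $\nu_n$ away from the origin is at $y_n$, so write $\nu_n=c\,\delta_{y_n}+\rho_n$ with $c=\nu_n(\{y_n\})$ and $\rho_n$ carrying no mass at $y_n$. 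The first observation is that the coefficient \emph{annihilates} the atom: since $\langle G_n\nu_n,f\rangle=c\,G_n(y_n)f(y_n)+\int G_n f\,d\rho_n$ and $G_n(y_n)=0$, we have $G_n\,\nu_n=G_n\,\rho_n$ as distributions, so the left-hand side of \eqref{eqnu} does not see $c$ directly.

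Next I would read off the one-sided limits of $G_n u_n$ at $y_n$ from \eqref{eqnu}. On a punctured neighbourhood of $y_n$ (where $y_n>0$, hence $d(y_n)<\infty$) Lemma \ref{AC} makes $\nu_{n-1},\nu_n,\nu_{n+1}$ absolutely continuous with smooth densities, and the right-hand side of \eqref{eqnu} is a finite measure $\Phi_n$. Thus $H:=G_n\,\nu_n$ is a distributional primitive of $\Phi_n$, so it is represented by a function of bounded variation admitting one-sided limits $H(y_n^{\pm})$, and $H=G_n u_n$ almost everywhere near $y_n$. Because $u_n\in L^1$ while $G_n(y)\sim G_n'(y_n)(y-y_n)$, a nonzero value of $H(y_n^{+})$ or $H(y_n^{-})$ would force $u_n\sim H(y_n^{\pm})/G_n$ to have a non-integrable $1/(y-y_n)$ singularity, a contradiction; hence $H(y_n^{+})=H(y_n^{-})=0$ and $H$ is continuous at $y_n$. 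Consequently the atom of $\partial_y H=\Phi_n$ at $y_n$ vanishes. Since $\nu_{n\pm1}$ have no atom at $y_n$ (preceding lemma), the only atomic contribution to $\Phi_n$ at $y_n$ is $\big(\lambda-n(b(y_n)+D+d(y_n))\big)\,c\,\delta_{y_n}$, and therefore $\big(\lambda-n(b(y_n)+D+d(y_n))\big)\,c=0$.

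This already gives $c=0$ unless the borderline identity $\lambda=n\big(b(y_n)+D+d(y_n)\big)$ holds, and \emph{this identity is the main obstacle}: in that case the atom is invisible both to $G_n\nu_n$ and to the source $\Phi_n$, so \eqref{eqnu} alone — and, one checks, the bare quasi-stationarity balance at the point $(n,y_n)$ — cannot exclude $c>0$. To close this case I would leave the measure equation and use the positive right eigenfunction $h$ attached to the quasi-stationary distribution, $\mathcal{L}h=-\lambda h$ with $h>0$ on $\NN^*\times(0,y^*]$ and $h(0,\cdot)\equiv 0$, furnished by the construction behind Theorem 4.2 of \cite{quatres}. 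Evaluating at $(n,y_n)$ the transport term drops because $G_n(y_n)=0$, so $\mathcal{L}h(n,y_n)=n\,b(y_n)h(n+1,y_n)+n(D+d(y_n))h(n-1,y_n)-n(b(y_n)+D+d(y_n))h(n,y_n)$; equating with $-\lambda h(n,y_n)=-n(b(y_n)+D+d(y_n))h(n,y_n)$ and cancelling yields $b(y_n)h(n+1,y_n)+(D+d(y_n))h(n-1,y_n)=0$. As $D+d(y_n)>0$ and $y_n>0$ forces $b(y_n)>0$, a sum of nonnegative terms vanishes, so (for $n\ge 2$) $h(n-1,y_n)=0$ and (for $n=1$) $h(2,y_1)=0$, contradicting $h>0$ off the absorbing set. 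Hence the borderline identity is impossible, $\lambda\neq n(b(y_n)+D+d(y_n))$, and therefore $c=0$, proving that $\nu_n$ has no Dirac mass at $y_n$.
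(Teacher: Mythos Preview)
Your reduction to the dichotomy $c=0$ or $\lambda=n\big(b(y_n)+D+d(y_n)\big)$ is correct and is essentially what the paper does, only phrased through one-sided limits of the $BV$ primitive $H=G_n u_n$ rather than through a sequence of test functions $f_q$; the two arguments are equivalent.

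The handling of the borderline case, however, has a genuine gap. You invoke a right eigenfunction $h>0$ on $\NN^*\times(0,y^*]$ with $\mathcal{L}h=-\lambda h$ and $h(0,\cdot)\equiv 0$, ``furnished by the construction behind Theorem~4.2 of \cite{quatres}'', and then evaluate the eigenvalue equation pointwise at $(n,y_n)$. But Theorem~4.2 of \cite{quatres} is a Schauder-type fixed point yielding the quasi-stationary \emph{measure}; nothing in this paper (or in the cited use of \cite{quatres}) produces a positive right eigenfunction, let alone one that is $C^1$ in $y$ so that $G_n(y_n)\,\partial_y h(n,y_n)$ is well-defined and equals zero. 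Strict positivity of $h$ at every point of $\NN^*\times(0,y^*]$ would also require an irreducibility input that is only developed later (Lemma~\ref{irreducp}). Without these three ingredients --- existence of $h$, strict positivity at $(n\pm1,y_n)$, and enough $y$-regularity to evaluate $\mathcal{L}h$ pointwise --- your contradiction does not stand.

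The paper avoids this by staying entirely on the measure side. Assuming the borderline identity, it writes the ODE for $u_n$ near $y_n$ as $\partial_y(G_n u_n)=f_n\,G_n u_n+h_n$ with $f_n$ bounded (precisely because of the borderline identity) and $h_n=n\,b\,u_{n-1}+n(D+d)\,u_{n+1}\ge 0$. The only locally integrable solution is the Duhamel expression from $y_n$, and its sign forces $u_n\le 0$ unless $h_n\equiv 0$ near $y_n$; positivity of $u_n$ then gives $u_{n\pm1}\equiv 0$ there. Lemma~\ref{lemme5.4} propagates this to all $\nu_j$ on the punctured neighbourhood, and finally the equation for $\nu_{n+1}$ produces $0=b(y_n)\,c_n\,\delta_{y_n}$, contradicting $c_n\neq 0$. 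If you want to keep your eigenfunction route, you must first establish existence, strict positivity, and $C^1$-regularity of $h$; otherwise the paper's ODE--plus--support argument is the way to close the borderline case.
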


\begin{proof}
 Assume it does. Then in a neighborhood of $y_{n}$, by using \eqref{lebdec}
since $c_j^n=0$ for $j=0,n$ and by writing $c_{n}=c_n^n$, we can write
$$
d\nu_{n}=c_0^n \delta_{0}+c_{n}\delta_{y_{n}}+u_{n}(y)\, dy\,. 
$$ 

Let $f$ be a function $C^{\infty}$ with compact support containing 
$y_{n}$ but that does not contain $0$. We have 
\begin{eqnarray*} 
&{}& 
-\int f'(y) \left(D(y^*-y)-n \,\tilde b(y)\right)\;u_{n}(y)\,dy \\  
&{}&
=\int f(y)\;\big( b(y)\,n\,d\nu_{n-1}(y)+ (D+d(y))\,n\,d\nu_{n+1}(y)\big)\\ 
&{}& \;\; 
+\int f(y)\;\big(\lambda -(b(y)+D+d(y))\,n\big)\,u_{n}(y)\,dy \\
&{}& \;\;
+c_{n}\,f(y_{n})\,\big(\lambda -(b(y_{n})+D+d(y_{n}))\,n\big)\;. 
\end{eqnarray*} 
We now 
construct a sequence $(f_{q}: q\in \NN^*)$ 
of $C^{\infty}$ functions with support in a fixed small 
enough neighborhood of $y_{n}$ such that for some constant $C'$,
$$
f_{q}(y_{n})=1\ \forall q\ ; \; \sup_{y\in \RR_+}|f_{q}(y)| = 1\ ; \; \lim_{q\to 
\infty}|f_{q}(y)| =0\; \forall y\neq y_{n} 
$$ 
and 
$$ 
\ \sup_{y\in \RR_+}|y-y_{n}| 
|f'_{q}(y)|\leq C'\; \forall q\ ;\; \lim_{q\to \infty} (y-y_{n})f'_{q}(y)=0
\ \forall y\neq y_{n}. 
$$ 
Such a sequence can be easily constructed.

\medskip 

Recall that $\nu_{n-1}$ and $\nu_{n+1}$ are absolutely continuous with 
$C^{\infty}$ density near $y_{n}$. We conclude that if $c_{n}\neq 0$ 
$$ 
\lambda=(b(y_{n})+D+d(y_{n}))\,n\;. 
$$

In the case $d$ is constant, it is known that the eigenvalue $\lambda$ satisfies 
$\lambda=\kappa_n(D+d)<D+d$, and so it is strictly less than 
$(b(y_{n})+D+d(y_{n}))\,n\;$ and we obtain a contradiction.

\bigskip 

If $d$ is not constant, the proof of the contradiction is more intricate.

Coming back to Equation \eqref{eqnu} for $u_{n}$ in a neighborhood of $y_{n}$ but outside 
that point we get 
$$ 
\partial_{y}\big(G_{n}(y)\, u_{n}(y)\big)= 
f_{n}(y)\,G_{n}(y)\, u_{n}(y)+h_{n}(y) $$ with (see Lemma \ref{roots1}) $$ G_{n}(y)=D(y^*-y)-n\,
\tilde b(y) =\beta_{n}(y-y_{n})+\mathcal{O}\big((y-y_{n})^{2}\big),
$$ 
with $\beta_{n}<0$, and 
$$ 
f_{n}(y)=n\,\frac{b(y_{n})-b(y)+d(y_{n}) 
-d(y)}{G_{n}(y)}=\mathcal{O}(1)\;, 
$$ 
and 
$$ 
h_{n}(y)=b(y)\,n\,u_{n-1}(y)+ 
(D+d(y))\,n\,u_{n+1}(y). 
$$

and recall that $h_{n}(y)$ is $C^{\infty}$ near $y_{n}$. The only solution which is 
integrable near $y_{n}$ is given by 
$$ 
u_{n}(y)=\frac{1}{G_{n}(y)} \exp\left(\int^{y}_{y_{n}}f_{n}(s)ds\right)\;\int_{y_{n}}^{y}e^{-\int^{s}_{y_{n}}f_{n}(w)dw} 
\;h_{n}(s)\,ds \;. 
$$


If $h_{n}>0$ on a subset of positive measure of a small neighborhood of $y_{n}$, we 
have $u_{n}<0$ which is a contradiction.

Therefore, $h_{n}$ must vanish on both sides of $y_{n}$. By the above result on the 
support, we conclude that $u_{n}$ vanishes in a neighborhood of $y_{n}$ as well as 
all the $\nu_{j}$ with $j\neq n$ (see Lemma \ref{lemme5.4}). In particular, if we consider the equation for 
$\nu_{n+1}$ in this neighborhood (see \eqref{eqnu}), we get 
$$ 
0=b(y_{n})\, c_{n}\, \delta_{y_{n}} 
$$ 
which contradicts $c_{n}\neq 0$. 
\end{proof}

\begin{theorem}
\label{positiv} On $(0,y_{1})$, the density of $\nu_{n}$ satisfies 
$u_{n}>0$ except perhaps in $y_{n}$. \end{theorem}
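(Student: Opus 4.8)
The plan is to work from the distributional system \eqref{eqnu}, which, on the region $(0,y_1)\setminus\{y_n\}$ where Theorem~\ref{abscontqsd} guarantees a continuous nonnegative density $u_n$, reduces to the scalar first order equation $\partial_y\big(G_n(y)u_n(y)\big)=\big(\lambda-(b(y)+D+d(y))n\big)u_n(y)+h_n(y)$, with source $h_n(y):=b(y)\,n\,u_{n-1}(y)+(D+d(y))\,n\,u_{n+1}(y)\ge 0$ and $G_n(y)=D(y^*-y)-n\,\tilde b(y)$. Recall from Lemma~\ref{roots1} that $G_n>0$ on $(0,y_n)$ and $G_n<0$ on $(y_n,y_1)$. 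The strategy is to show that $u_n$ cannot vanish at any interior point of $(0,y_1)$ other than $y_n$: I would argue by contradiction, first propagating a single zero into a whole dead interval by a maximum principle, then spreading that dead interval to the entire family via Lemma~\ref{lemme5.4}, and finally contradicting the fact that $\nu$ is a genuine probability measure through the behaviour at the entrance boundary $y=0$.

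The key mechanism is an integrating factor. On each of the two intervals $(0,y_n)$ and $(y_n,y_1)$ the coefficient $G_n$ has a fixed sign and does not vanish, so I set $P'(y)=\big(\lambda-(b+D+d)n\big)/G_n$ and $w(y)=e^{-P(y)}G_n(y)u_n(y)$, which then satisfies $w'(y)=e^{-P(y)}h_n(y)\ge 0$. Thus $w$ is nondecreasing on each interval, while $w\ge 0$ on $(0,y_n)$ and $w\le 0$ on $(y_n,y_1)$ because of the sign of $G_n$. Consequently, if $u_n(y^0)=0$ at some $y^0\in(0,y_n)$ then $w(y^0)=0$ and monotonicity forces $w\equiv 0$, hence $u_n\equiv 0$, on all of $(0,y^0]$; and then $w'=e^{-P}h_n\equiv 0$ there forces $h_n\equiv 0$, whence $u_{n-1}\equiv u_{n+1}\equiv 0$ on $(0,y^0)$ since $b>0$ on $(0,\infty)$ and $D+d>0$. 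A symmetric statement holds on $(y_n,y_1)$: a zero at $y^0$ there makes the whole family vanish on $(y^0,y_1)$.

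Next I would globalize. By Lemma~\ref{lemme5.4}, once some $\nu_j$ vanishes on a small interval $I\subset(\moche\cup\{0\})^c$ every $\nu_j$ vanishes on $I$. Picking an index $j$ whose root lies below the dead interval---this is possible because $y_j\downarrow 0$ when $\eta=0$, and because $G_j<0$ on all of $(0,y^*)$ as soon as $j>Dy^*/\eta$ when $\eta>0$---I can re-run the monotonicity argument on the range where $G_j<0$ to push the common vanishing all the way up to $y_1$, then use $u_1$ (for which $G_1>0$ on $(0,y_1)$) to push it back down to $0$, and finally extend it across $(y_1,y^*)$ where every $G_n$ is negative. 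The outcome is that all the absolutely continuous densities vanish on $(0,y^*)$, so that, having no Dirac masses away from $0$ by the preceding lemmas, each $\nu_n$ collapses to $c^n_0\,\delta_0$.

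Finally I would rule this out. First, $\kappa_1>0$: otherwise \eqref{eqnu} with $n=1$ and $\nu_0=0$ gives $(D+d)\nu_2=0$, hence $\nu_2=0$, and recursively $\nu\equiv 0$, impossible. On the other hand, inserting the collapsed measure into \eqref{eqnu} (equivalently, testing the generator against functions supported near $y=0$) isolates the singular contribution of the drift: because $\tilde b(0)=0$ we have $G_1(0)=Dy^*>0$, so the transport term produces a contribution proportional to $\partial_y f(1,0)$ that the purely Dirac-valued right hand side cannot balance, forcing $c^1_0=0$ and hence $\nu_1=0$, contradicting $\kappa_1>0$. The step I expect to be genuinely delicate is exactly this boundary analysis at $y=0$: the coefficient $\tilde b$ jumps there ($\tilde b(0)=0$ while $\tilde b(0^+)=\eta$), so the balance of the Dirac-derivative part of \eqref{eqnu} at this entrance boundary must be justified with care. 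The remaining subtlety, harmless by comparison, is to check that the integrating-factor manipulation survives crossing the points of $\moche$ inside $(0,y_1)$, where $u_{n\pm1}$ may blow up but stay integrable and nonnegative, so that $w$ remains absolutely continuous and monotone.
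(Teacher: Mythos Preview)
Your integrating-factor computation and the propagation of a single zero of $u_n$ to a whole interval are correct and in fact coincide with the paper's first auxiliary lemma. Where you diverge is in how you establish that $h_n$ (equivalently, the neighbouring densities $u_{n\pm1}$) cannot vanish on an interval. The paper does not iterate through the hierarchy and chase the boundary at $y=0$; instead it proves a probabilistic irreducibility statement (Lemma~\ref{irreducp}): from any initial point in $\NN^*\times(0,y_1)$ the process reaches any open target in $\NN\times(0,y_1)$ with positive probability in bounded time. Feeding this into the quasi-stationary identity \eqref{QSDI1} (Proposition~\ref{irredqsd}) yields directly that every $\nu_m$ has dense support in $(0,y_1)$, and then a single application of your variation-of-constants formula at level $n$ finishes the proof, since $h_n$ is then strictly positive on a dense set near any putative zero.

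Your purely analytic route is viable, but the step you correctly flagged as delicate --- ruling out a quasi-stationary distribution concentrated at $y=0$ --- is genuinely so when $\eta>0$: the coefficient $\tilde b$ is discontinuous at $0$, and for $n\ge n_0$ the formal expression \eqref{generator} does not agree with the true generator of the process on $\{y=0\}$ (the nutrient sticks there rather than drifting with speed $Dy^*$). The distributional balance you sketch therefore has to be read dynamically, for instance by noting that from $(1,0)$ the nutrient leaves $0$ before extinction whenever $y_1>0$, which then contradicts a QSD supported on $\{y=0\}$ via \eqref{QSDI1}. The paper's probabilistic shortcut buys precisely the avoidance of this boundary analysis and of the bookkeeping across the points of $\moche$; your approach buys independence from the explicit path construction of Lemma~\ref{irreducp}.
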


The proof uses two lemmas.

\begin{lemma} 
If $\nu_{n+1}$ or $\nu_{n-1}$ has a support dense in $(0,y_{1})$ then 
$u_{n}$ can be $0$ only in $y_{n}$. 
\end{lemma}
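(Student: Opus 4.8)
The plan is to turn the distributional identity \eqref{eqnu} into a scalar first-order ODE and to run a one-sided maximum-principle argument on it. By Lemmas \ref{AC}--\ref{56} the measure $\nu_n$ carries no atom in the interior of $(0,y_1)$, so there $d\nu_n=u_n(y)\,dy$ and $u_n$ is a nonnegative $C^\infty$ function on each connected component of $(0,y_1)\setminus(\moche\cup\{y_n\})$. Fix such a component $I$. Writing $G_n(y)=D(y^*-y)-n\,\tilde b(y)$, whose only zero in $(0,y_1)$ is $y_n$, equation \eqref{eqnu} reads on $I$
\[
\partial_y\big(G_n(y)\,u_n(y)\big)=g_n(y)\,u_n(y)+h_n(y),
\]
with $g_n(y)=\lambda-n\big(b(y)+D+d(y)\big)$ and
\[
h_n(y)=n\,b(y)\,u_{n-1}(y)+n\,(D+d(y))\,u_{n+1}(y)\ \ge\ 0 .
\]
The two structural facts I will use are $h_n\ge 0$ and $u_n\ge 0$, together with the fact that $G_n$ keeps a constant sign on $I$.

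Set $w=G_n u_n$, so that $w'=(g_n/G_n)\,w+h_n$ on $I$. Suppose $u_n$ vanishes at some $y_0\in I$. Multiplying by the positive integrating factor $\mu(y)=\exp\big(-\int_{y_0}^y (g_n/G_n)\big)$ and using $w(y_0)=0$ gives
\[
\mu(y)\,w(y)=\int_{y_0}^{y}\mu(s)\,h_n(s)\,ds .
\]
If $I\subset(0,y_n)$ then $G_n>0$, hence $w=G_nu_n\ge 0$; for $y<y_0$ the right-hand side is $\le 0$, so for every such $y$ both sides vanish, giving $w\equiv 0$ and $h_n\equiv 0$ on a left neighborhood of $y_0$. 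If instead $I\subset(y_n,y_1)$ then $G_n<0$, $w\le 0$, and the same identity with $y>y_0$ forces $h_n\equiv 0$ on a right neighborhood. In both cases an interior zero of $u_n$ produces a nondegenerate open interval $J\subset(0,y_1)$ on which $h_n$ vanishes identically.

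I now convert this into a contradiction with the hypothesis. Since $b(y)>0$ and $D+d(y)>0$ for every $y\in(0,y_1)$, the identity $h_n\equiv 0$ on $J$ forces $u_{n-1}\equiv u_{n+1}\equiv 0$ on $J$; as $\nu_{n-1}$ and $\nu_{n+1}$ are absolutely continuous away from $0$, this means $\nu_{n-1}(J)=\nu_{n+1}(J)=0$. This is impossible if one of $\nu_{n+1},\nu_{n-1}$ has support dense in $(0,y_1)$, since a dense support charges every open subinterval. Hence $u_n$ has no zero inside a smooth component, i.e. $u_n>0$ on $(0,y_1)\setminus(\moche\cup\{y_n\})$.

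The step I expect to be the real obstacle is upgrading this to ``$u_n$ can be $0$ only in $y_n$'', i.e. removing the finitely many exceptional points $y_j\in\moche\cap(0,y_1)$ with $j\neq n$. Each such $y_j$ is isolated in $(0,y_1)$ (the $y_j$ accumulate only at $0$), so it is the common endpoint of two smooth components; I would rule out a zero of $u_n$ at $y_j$ by applying the same integrating-factor identity on the adjacent component from which the sign of $G_n$ lets me propagate a boundary zero inward, as above, again producing an interval on which $h_n$, and hence $u_{n\pm 1}$, vanishes. Making this boundary argument clean — in particular controlling the one-sided limit of $u_n$ at $y_j$ using the ODE and the already established local boundedness of the density — is the delicate point; the purely analytic core (one-sided propagation of a zero under $u_n\ge 0$ and $h_n\ge 0$) is routine.
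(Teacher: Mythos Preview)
Your approach is essentially identical to the paper's: it writes, for a putative zero $z\neq y_n$ of $u_n$, the integrated formula
\[
u_n(y)=\frac{1}{G_n(y)}\exp\Big(\int_z^y f_n\Big)\int_z^y \exp\Big(-\int_z^s f_n\Big)\,h_n(s)\,ds
\]
(referring back to the computation in Lemma~\ref{56}) and then simply says ``the conclusion follows'', leaving to the reader precisely the one-sided sign argument you spell out.

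The ``obstacle'' you isolate at the points $y_j\in\moche$ with $j\neq n$ is not one. By the preceding lemmas each $\nu_m$ is absolutely continuous on $(0,y_1)$ with density locally bounded away from $\{0,y_m\}$ (and in $L^1$ there), so the right-hand side of \eqref{eqnu} lies in $L^1_{\mathrm{loc}}$ on all of $(0,y_1)\setminus\{y_n\}$. The distributional equation then gives $G_n u_n\in W^{1,1}_{\mathrm{loc}}$ on $(0,y_1)\setminus\{y_n\}$; since $G_n(y_j)\neq 0$, $u_n$ is continuous across every $y_j$ and your integrating-factor identity holds on the full intervals $(0,y_n)$ and $(y_n,y_1)$, not merely on the connected components of $(0,y_1)\setminus\moche$. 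The paper accordingly takes $z\in(0,y_1)$ with $G_n(z)\neq 0$ and runs the formula globally; there is no separate boundary analysis to carry out. Once you drop the unnecessary restriction to a single smooth component, your argument \emph{is} the paper's proof, just written out more carefully.
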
 

\begin{proof} 

The function $G_{n}$ has a simple zero in $y_{n}$. Assume that for $z\in (0,y_{1})$, 
$z\neq y_{1}$, $u_{n}(z)=0$ (and $G_{n}(z)\neq 0$).

Computation as in the proof of Lemma \ref{56} gives 
$$ 
u_{n}(y)=\frac{1}{G_{n}(y)} \exp\left(\int^{y}_{z}f_{n}(s)ds\right)\;\int_{z}^{y}e^{-\int^{s}_{z}f_{n}(w)dw} 
\;h_{n}(s)\,ds \;. 
$$

The conclusion follows. 
\end{proof}

\medskip

Let us show that the process is irreducible up to extinction and before $y_1$.

\medskip

\begin{lemma} 
\label{irreducp}
Starting from any initial condition on $\NN^*\times (0,y_1)$ 
the process $Z$ has dense support on $\NN\times (0,y_1)$, that is,
\begin{eqnarray*}
&& \forall m\in \NN, \, \forall y'\in (0,y_1),\, \forall \gamma>0,\,
\exists t(y')>0 \hbox{ such that } \forall t> t(y'),\\
&& 
\forall (y_0,n_0)\!\in \!\NN^*\times (0,y_1):\;
\mathbb{P}_{(n_0,y_0)}(N(t)\!=\!m,Y(t)\!\in \! (y'\!-\!\gamma, y'\!+\!\gamma))\!>\!0.
\end{eqnarray*}
\end{lemma}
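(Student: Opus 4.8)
The plan is to treat this as a controllability statement for the piecewise-deterministic dynamics and then to convert it into a positivity statement via the strict positivity of the jump rates. Between jumps the nutrient follows the ODE $\dot Y = G_N(Y)$ with $N$ held fixed, and a birth or a death changes $N$ by $\pm 1$ without moving $Y$; moreover, as long as $Y\in(0,y_1)$ and $N\ge 1$ the birth rate $b(Y)\,N$ and the death rate $(D+d(Y))\,N$ are both strictly positive. Consequently any continuous path obtained by concatenating finitely many flow arcs of $\dot Y=G_n(Y)$ (with $n$ constant on each arc) and instantaneous $\pm1$ changes of $n$, and which keeps $(N,Y)$ inside $\NN^*\times(0,y_1)$ (reaching $N=0$ only at the very end if $m=0$), is approximated with strictly positive probability by the process. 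So it suffices to exhibit, for every $t$ beyond a threshold $t(y')$ depending only on $y',\gamma,m$, such a control steering an arbitrary $(n_0,y_0)\in\NN^*\times(0,y_1)$ to $\{m\}\times(y'-\gamma,y'+\gamma)$ in time exactly $t$.

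Steering $Y$ is achieved by choosing $N$. By Lemma \ref{roots1} the function $G_n$ is strictly decreasing in $y$ and decreasing in $n$, so $G_1>0$ on $(0,y_1)$ while for $n$ large enough $G_n<0$ throughout $[y',y_1]$. Thus with $N=1$ the flow pushes $Y$ monotonically up toward $y_1$, and with $N=n_1$ (a fixed large value) it pushes $Y$ monotonically down; by Corollary \ref{masinva} none of these arcs leaves $(0,y_1)$. Fix $\gamma'\le\gamma$ with $\overline{(y'-\gamma',y'+\gamma')}\subset(0,y_1)$ and set $J'=(y'-\gamma',y'+\gamma')$. First I would compress the $n_0-1$ deaths needed to bring $N$ down to $1$ into a short initial window (positive probability for each fixed $n_0$), during which $Y$ hardly moves; then I would flow $Y$ into $J'$ using $N=1$ if $y_0$ lies below $J'$ and $N=n_1$ if it lies above. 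Since $G_1$ is bounded below and $G_{n_1}$ bounded above away from $0$ on the relevant compact subintervals, the duration of this steering is bounded by a constant $T^*(y',\gamma')$ uniformly in $y_0\in(0,y_1)$, and this is what fixes $t(y')$.

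To meet the requirement for \emph{every} $t>t(y')$, I would then keep $Y$ parked inside $J'$ for the remaining time by alternating $N=1$ (up) and $N=n_1$ (down), switching through short bursts of births or deaths whenever $Y$ reaches the endpoints of a slightly smaller interval; this produces a trajectory staying within $\gamma$ of $y'$ for an arbitrarily long duration, each switching burst having positive probability. Finally, in a last short window ending at time $t$, I would fire the finitely many births or deaths needed to set $N=m$ (for $m=0$ this is the death of the last individual, after which $\dot Y=D(y^*-Y)>0$ keeps $Y$ near $y'$), the bursts being fast enough that $Y$ never leaves $(y'-\gamma,y'+\gamma)$. Concatenating these phases yields a realizable control reaching $\{m\}\times(y'-\gamma,y'+\gamma)$ at time exactly $t$, whence positive probability.

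The main obstacle is the combination of the two uniformities: a single threshold $t(y')$ must serve all initial conditions (in particular all $n_0$ and all $y_0$ arbitrarily close to $0$ or $y_1$) and all $t>t(y')$. The dependence on the initial condition is absorbed by compressing the reduction of $N$ into negligible time and by the uniform-in-$y_0$ bound $T^*(y',\gamma')$, which comes from the strict sign of $G_n$ on compact subintervals of $(0,y_1)$; the dependence on $t$ is handled by the parking phase, which decouples the bounded time needed to reach the neighborhood from the arbitrary total time $t$. The remaining technical point is the rigorous justification that the prescribed piecewise-deterministic control is approximated with positive probability, i.e.\ a support-type argument for this process, which rests precisely on the positivity of the birth and death rates everywhere on $\NN^*\times(0,y_1)$.
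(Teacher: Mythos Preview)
Your argument is correct and follows essentially the same strategy as the paper's own proof: compress the initial jumps of $N$ into a short window, use the deterministic flow with $N=1$ (or a large fixed $N$) to steer $Y$ into a neighborhood of $y'$, and handle arbitrary $t>t(y')$ by a waiting/oscillation phase before a final short burst sets $N=m$. Your write-up is in fact more explicit than the paper's about the case $y_0>y'$ and about the parking mechanism, but the underlying idea is the same.
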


\begin{proof} 
In the proof we will assume $y_0<y'$ (the case $y_0\in (y',y_1)$ is
shown similarly). 
Let $\beta>0$ be smaller than $\min(y_0, y'-y_0)/2$ 
and let $\tilde t>0$ be fixed. 
Then, there is $\epsilon=\epsilon(n_0, \beta)$ 
such that the following event has a strictly positive probability:

\smallskip

\noindent $-\;$ On the interval time $[0,\epsilon]$ there are exactly 
$n_0-1$ deaths and there is no other jump of $N$, and so $N(t)$ decreases from $n_0$ 
to $N(\epsilon)=1$; 

\smallskip

\noindent $-\;$ $Y(\epsilon)$ belongs to $(Y(0)-\beta, Y(0)+\beta)$;

\smallskip

\noindent $-\;$ On the interval of time $[\epsilon, \epsilon+{\tilde t}]$ there
is no jump of $N$ (no birth and no death);

\smallskip

\noindent $-\;$ On the interval of time 
$[\epsilon+{\tilde t}, \epsilon+{\tilde t}+\beta]$ there are exactly $m-1$ births
and no other jump when $m>1$, there is no jump if $m=1$, or there is a unique
death and no other jump when $m=0$;

\smallskip

\noindent $-\;$ $|Y(\epsilon+{\tilde t})+\beta)-Y(\epsilon+{\tilde t})|<\gamma/2$.

\medskip

For $t\in [\epsilon, \epsilon+\tilde t)$ we have $N(t)=1$, then 
in this interval of time and before the process $Y(t)$ has attained 
$y'$, the derivate
$$
\frac{dY(t)}{dt}=D(y^*-Y(t))-\tildeb(Y(t)) 
$$
is bounded below by $D(y^*-y')-\tildeb(y')$.
Take ${\tilde t}= (y'-Y(0)+\beta)/(D(y^*-y')-\tildeb(y'))$, let us see that
the number $t(y')=\epsilon+{\tilde t}'$ makes the job. In fact, we 
have ensured that in a time
smaller or equal to $t(y')$ we have attained $\{m\}\times [y'-\gamma, y'+\gamma]$.
For any time bigger than $t(y')$ it suffices to modify slightly the above argument
and allow a sequence of jumps up to the moment that $Y(t)$ has negative derivate 
and in this way we can postpone the time of attaining the set 
$\{m\}\times [y'-\gamma, y'+\gamma]$ from $t(y')$ to a prescribed time $t>t(y')$. 
\end{proof}

\medskip

\begin{proposition}
\label{irredqsd}
For all $n\in \NN^*$, the probability measure $\nu_{n}$ has a support dense in
$(0,y_{1})$. 
\end{proposition}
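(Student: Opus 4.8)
The plan is to argue by contradiction, combining the propagation Lemma \ref{lemme5.4} with the irreducibility Lemma \ref{irreducp} through the quasi-stationary identity \eqref{QSDI1}. Suppose the conclusion fails for some $n$. Then there is a nonempty open subinterval of $(0,y_1)$ on which $\nu_n$ has no mass, and since $\moche\cup\{0\}$ is countable (Lemma \ref{roots1}) we may shrink it to an interval $I$ with $\overline I\subset(0,y_1)$, $\overline I\cap(\moche\cup\{0\})=\emptyset$, and $\nu_n(I)=0$. By Lemma \ref{lemme5.4} this forces $\nu_j(I)=0$ for every $j\in\NN^*$, hence $\nu(\NN^*\times I)=0$.

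Next I would feed this into the quasi-stationary identity. Taking $f=\mathbf 1_{\NN^*\times I}$ in \eqref{QSDI1} and using that $\{N(t)\ge 1\}=\{T_0>t\}$ gives $\PP_\nu(Y(t)\in I,\,T_0>t)=e^{-\lambda t}\,\nu(\NN^*\times I)=0$ for every $t>0$. Expanding this by the Markov property, $\int \PP_{(m,y)}(Y(t)\in I,\,T_0>t)\,\nu(dm,dy)=0$, so the nonnegative integrand vanishes for $\nu$-almost every starting point $(m,y)$ and every $t$. Integrating against $e^{-t}\,dt$ and applying Tonelli, I obtain $\int_0^\infty e^{-t}\,\PP_{(m,y)}(Y(t)\in I,\,T_0>t)\,dt=0$ for $\nu$-a.e. $(m,y)$.

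The contradiction then comes from accessibility of $I$ from every point where $\nu$ lives. Since by Proposition \ref{yaglom} (together with Proposition \ref{defproc}) $\nu$ is carried by $\NN^*\times[0,y^*]$, it suffices to show that from any $(m,y)$ in that set there is, with positive probability and without extinction, a finite time $\tau$ at which $Z(\tau)\in\NN^*\times(0,y_1)$. Once the trajectory has entered $\NN^*\times(0,y_1)$, Lemma \ref{irreducp} (whose threshold $t(y')$ is uniform over starting points there) yields, for a fixed $y'\in I$ and $\gamma$ with $(y'-\gamma,y'+\gamma)\subset I$, that $\PP_{Z(\tau)}(N(t)=1,\,Y(t)\in(y'-\gamma,y'+\gamma))>0$ for all $t>t(y')$; by the strong Markov property at $\tau$ this makes $\PP_{(m,y)}(Y(t)\in I,\,T_0>t)>0$ for all sufficiently large $t$, so its Laplace transform is strictly positive, contradicting the previous paragraph.

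The delicate step is precisely this accessibility argument. To drive $Y$ into $(0,y_1)$ while keeping $N\ge 1$: for $y\in[y_1,y^*]$ the drift $G_m(Y)\le G_1(Y)\le 0$ (Lemma \ref{roots1} and Corollary \ref{masinva}) pushes $Y$ down into $(0,y_1)$, after first forcing one birth when $m=1$ and $y=y_1$ so that the drift becomes strictly negative; for $y\in(0,y_1)$ there is nothing to do. The genuinely technical case is the hibernation regime ($d(0)<\infty$, $\eta>0$), where $\nu_n$ may carry an atom at $y=0$ with $N\ge n_0$: there one first lets $N$ descend by pure deaths (no births occur since $b(0)=0$, and $Y$ stays clamped at $0$ because $G_m(0^+)<0$) down to a level below $n_0$, which keeps $N\ge1$ using $n_0\ge 2$ — valid whenever $y_1$ exists, since then $\eta<Dy^*$ — after which $G(0^+)>0$ lifts $Y$ into $(0,y_1)$. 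In the instantaneous-death case ($d(0)=\infty$) this point does not even arise, because $\nu$ cannot charge $\{Y=0\}$ with $N\ge1$. In all cases the process reaches $I$ with positive probability, giving the contradiction and hence the density of the support of $\nu_n$ in $(0,y_1)$.
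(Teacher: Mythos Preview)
Your argument is correct and reaches the same conclusion, but the route differs from the paper's in two respects.

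First, the paper proceeds \emph{directly} rather than by contradiction: it writes the quasi-stationary identity in the form
\[
\nu\big(\{m\}\times(y'-\gamma,y'+\gamma)\big)
= e^{\lambda t}\,\PP_\nu\big(N(t)=m,\;Y(t)\in(y'-\gamma,y'+\gamma)\big)
\]
and observes that the right-hand side is strictly positive for $t>t(y')$ by Lemma~\ref{irreducp}, since the integrand $\PP_{(n,y_0)}(N(t)=m,\,Y(t)\in(y'-\gamma,y'+\gamma))$ is positive for every $y_0\in(0,y_1)$. You instead suppose $\nu_n(I)=0$, invoke Lemma~\ref{lemme5.4} to propagate this to all levels, and then derive a contradiction from accessibility. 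The detour through Lemma~\ref{lemme5.4} is not needed: had you taken $f=\mathbf 1_{\{n\}\times I}$ rather than $f=\mathbf 1_{\NN^*\times I}$ in \eqref{QSDI1}, the same contradiction would follow without it.

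Second, and this is where your approach buys something, you treat explicitly the accessibility from starting points in $[0,y^*]\setminus(0,y_1)$ --- the cases $y\in[y_1,y^*]$ and $y=0$ --- whereas the paper's three-line proof tacitly assumes $\nu_n$ already has some mass in $(0,y_1)$ from which Lemma~\ref{irreducp} can be launched. Your treatment of these boundary cases is essentially sound; one small imprecision is the phrase ``after first forcing one birth when $m=1$ and $y=y_1$'': for $m=1$ and $y\in(y_1,y^*]$ the flow under $G_1$ approaches $y_1$ only asymptotically, so the birth should be forced at some positive time before reaching $y_1$ (the rate $b(Y)\ge b(y_1)>0$ makes this happen with positive probability), after which $G_2<0$ uniformly on $[y_1,y]$ drives $Y$ strictly below $y_1$. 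With that adjustment your accessibility argument is complete.
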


\begin{proof}
Denote by $n\otimes\nu_n$ the probability measure defined on $\NN\times \RR_+$ by
$n\otimes\nu_n(\{m\}\times B)=\delta_n(m)\nu_n(B)$ for all $B\in \B(\RR_+)$.
From Lemma \ref{irreducp} and the quasi-stationary distribution  property \eqref{QSDI1} we have
\begin{eqnarray*}
&&\PP_{n\otimes\nu_n}(N(0)\!=\!m, Y(0)\!\in \!(y'\!-\!\gamma, y'\!+\!\gamma)=
e^{-\lambda t}\PP_{n\otimes\nu_n}(N(t)\!=\!m, Y(t)\!\in \!(y'\!-\!\gamma, y'\!+\!\gamma)\\
&& \; =
e^{-\lambda t}\int \! \PP_{(n,y_0)}(N(t)\!=\!m, Y(t)\!\in \! (y'\!-\!\gamma, y'\!+\!\gamma))
d\nu_n(y_0)>0.
\end{eqnarray*}
Then, the result follows.
\end{proof}

\medskip

\section{Bound on the asymptotic survival rate} 





\begin{theorem}
Let $\lambda$ be the exponential  extinction rate associated  with a quasi-stationary distribution $\nu$. Then
\label{bornelambda}
\begin{equation}
\label{rate}
\lambda< \inf_{n}(n(b(y_{n})+D+d(y_{n})).
\end{equation}
\end{theorem}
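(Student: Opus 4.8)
The plan is to prove the strict inequality $\lambda< n(b(y_n)+D+d(y_n))$ for \emph{each} fixed $n\in\NN^*$ (for which $y_n$ exists), which immediately gives the bound on the infimum. The natural starting point is the distributional equation \eqref{eqnu} governing the family $(\nu_n)$. I would integrate \eqref{eqnu} over $\RR_+$ against the constant test function $f\equiv 1$. Since the left-hand side is a total derivative $\partial_y\big(G_n(y)\nu_n\big)$ with $G_n(y)=D(y^*-y)-n\tilde b(y)$, its integral over $\RR_+$ should vanish (using that $\nu_n$ has total mass $\kappa_n/\kappa_n=1$ after normalization, is supported in $[0,y^*]$ by Proposition \ref{yaglom}, and that the boundary terms disappear). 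This yields the balance relation
$$
0=n\,b_{n-1}\,\kappa_{n-1}+n\,(D+d)_{n+1}\,\kappa_{n+1}-n\,(b+D+d)_n\,\kappa_n+\lambda\,\kappa_n,
$$
where I write $b_k=\int b(y)\,d\nu_k(y)$ and similarly for the other averaged coefficients, and $\kappa_k=\nu(\{k\}\times\RR_+)$.

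From this relation I would solve for $\lambda$:
$$
\lambda=n\,(b+D+d)_n-n\,b_{n-1}\frac{\kappa_{n-1}}{\kappa_n}-n\,(D+d)_{n+1}\frac{\kappa_{n+1}}{\kappa_n}.
$$
The two subtracted terms are strictly positive: the coefficients $b$, $D+d$ are strictly positive on the relevant support, the conditional laws $\nu_{n\pm1}$ are genuine probability measures, and the weights $\kappa_{n\pm1}$ are strictly positive (the quasi-stationary distribution charges every level, which follows from irreducibility as in Proposition \ref{irredqsd} together with \eqref{QSDI1}). Hence $\lambda< n\,(b+D+d)_n=n\int(b(y)+D+d(y))\,d\nu_n(y)$. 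The remaining step is to compare the averaged quantity $\int(b(y)+D+d(y))\,d\nu_n(y)$ with the pointwise value $b(y_n)+D+d(y_n)$ appearing in \eqref{rate}. Here I would invoke that $\nu_n$ concentrates its mass on $[0,y_1]\supseteq\{y_n\}$ near the equilibrium root, combined with the monotonicity of $b$ (increasing) and $d$ (non-increasing): since $y_n$ is the stationary nutrient level for $n$ individuals and the support lies in $[0,y_n]\cup\{$small fluctuations$\}$, one argues that the average of $b+d$ is controlled by its value at $y_n$.

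The main obstacle is precisely this last comparison between the \emph{averaged} rate $\int(b+d)\,d\nu_n$ and the \emph{pointwise} rate $b(y_n)+d(y_n)$, because $b$ and $d$ pull in opposite directions under the monotonicity assumptions, so one cannot naively bound the average by the value at a single point. The cleanest route is probably to avoid this comparison entirely: rather than passing through $\int(b+D+d)\,d\nu_n$, I would try to keep the two positive correction terms and argue that $\lambda< n(b(y_n)+D+d(y_n))$ follows because the correction already absorbs the discrepancy, or alternatively to use the invariance of $\NN\times[0,y_n]$ for the stopped process $Z^{T_{<n}}$ (Corollary \ref{masinva}) to localize the support of $\nu_n$ tightly around $y_n$ and then exploit that, on $[0,y_n]$, one has $b(y)\le b(y_n)$ while the loss in $d$ is compensated by the strictly positive subtracted terms. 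If a clean support argument is unavailable, the fallback is to prove the weaker but still sufficient statement $\lambda< n\int(b+D+d)\,d\nu_n$ and then strengthen it by a separate monotonicity or convexity estimate tailored to the specific form of $G_n$ near its root $y_n$.
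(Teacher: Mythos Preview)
Your balance--relation argument is correct as far as it goes and does yield the strict inequality $\lambda < n\int (b+D+d)\,d\nu_n$, but the obstacle you flag is genuine and not a mere technicality.  By Proposition~\ref{irredqsd} the support of $\nu_n$ is all of $(0,y_1)$, not localised near $y_n$; on that support $d$ can be arbitrarily large (it may blow up at $0$), so $\int d\,d\nu_n$ can exceed $d(y_n)$ by an amount over which the subtracted terms $\kappa_{n\pm1}/\kappa_n$ give no a~priori control.  The invariance in Corollary~\ref{masinva} concerns the \emph{stopped} process $Z^{T_{<n}}$ and does not confine the quasi-stationary slice $\nu_n$ to $[0,y_n]$.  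Thus the averaged bound cannot be upgraded to the pointwise bound \eqref{rate} by monotonicity or by the global balance relation alone; a different mechanism is needed.

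The paper's argument bypasses averaging entirely and is local at the singular point $y_n$.  Working with the density $u_n$ on $(0,y_1)$, the equation (for $n=1$, with $G=G_1$ and $H=D+d$) reads
\[
\frac{d}{dy}\big(G(y)\,u_1(y)\big)+(b+H-\lambda)\,u_1(y)=2H(y)\,u_2(y)\ge 0\,,
\]
so an integrating factor gives, for $a<y<y_1$,
\[
u_1(y)\;\ge\;\frac{G(a)\,u_1(a)}{G(y)}\;\exp\!\left(-\int_a^y\frac{b(s)+H(s)-\lambda}{G(s)}\,ds\right).
\]
Since $G$ has a simple zero at $y_1$ with $G'(y_1)<0$, the right-hand side behaves like a constant times $|y-y_1|^{-1-\,(b(y_1)+H(y_1)-\lambda)/G'(y_1)}$ as $y\uparrow y_1$.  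Because $u_1(a)>0$ by Theorem~\ref{positiv}, integrability of $u_1$ near $y_1$ forces the exponent to exceed $-1$, i.e.\ $(b(y_1)+H(y_1)-\lambda)/G'(y_1)<0$, hence $\lambda<b(y_1)+D+d(y_1)$.  The pointwise value at $y_n$ thus enters not through any support or monotonicity consideration but through the local singular analysis of the drift at its equilibrium; this is the idea your proposal is missing.
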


\begin{proof}
In Section 3 in \cite{quatres}, it was shown that this extinction rate is an eigenvalue of the dual problem associated with the probability measure
$${\cal L}^\dag \nu  = -\lambda \nu.
$$
For simplicity, we will prove inequality \eqref{rate} in case of $n=1$.
The general case proved in a similar way
is left to the reader. 

We introduce the notation
$$
G(y)=D(y^{*}-y)-\tildeb(y)\,
$$
$$
H(y)=\big(D+d(y)\big)\,
$$

The quasi-stationary distribution equation for $n=1$ is given by
$$
-\frac{d}{dy}\big(G(y)\,u_{1}(y)\big)-
\big(b(y)+H(y)\big)u_{1}(y)+ 2 H(y)\,u_{2}(y)+\lambda
\, u_{1}(y)=0\;.
$$
Let $a\in]0,y_{1}[$. We have for $y\in]0,y_{1}[$
$$
\frac{d}{dy}\left(G(y)\,u_{1}(y)\,e^{\int_{a}^{y}(b(s)+H(s)-\lambda)/G_{1}(s)
ds}\right)=e^{\int_{a}^{y}(b(s)+H(s)-\lambda)/G(s)
ds}\,2\, H(y)\,u_{2}(y)\;,
$$
and integrating between $a$ and $y$ we get
$$
\big(G(y)\,u_{1}(y)\,e^{\int_{a}^{y}(b(s)+H(s)-\lambda)/G(s)
ds}-G(a)u_{1}(a)
$$
$$
= 2\int_{a}^{y}e^{\int_{a}^{\sigma}(b(s)+H(s)-\lambda)/G(s)
ds}\,H(\sigma)\,u_{2}(w)\,dw \ge 0\;.
$$
Therefore
$$
u_{1}(y)\ge \frac{G(a)u_{1}(a)}{G(y)}\;
e^{-\int_{a}^{y}(b(s)+H(s)-\lambda)/G(s)ds}\;.
$$
Let us study more carefully the quantity  $\;e^{-\int_{a}^{y}(b(s)+H(s)-\lambda)/G(s)ds}$. We recall that $G$ only vanishes at $y_{1}$ and that $G$ is decreasing since $b$ is increasing.  
By a simple computation one gets
$$b(s)+H(s)-\lambda)/G(s) = \frac{b(y_{1})+H(y_{1})-\lambda}{(s-y_{1})\,G'(y_{1})}\,+\mathcal{O}(1)\;,
$$
and thus 
$$
\int_{a}^{y}(b(s)+H(s)-\lambda)/G(s)
ds=\frac{b(y_{1})+H(y_{1})-\lambda}{G'(y_{1})}\,
\log|y-y_{1}|+\mathcal{O}(1)\;.
$$
Finally, since  $G(y_{1}) = 0$, and $G'(y_{1})<0$, we get for $y\le y_{1}$
$$
G(y)=(y-y_{1})\,G'(y_{1})+\mathcal{O}\big((y-y_{1})^{2}\big)=
|y-y_{1}|\big(|G'(y_{1})|+\mathcal{O}(|y-y_{1}|)\big)\;.
$$
Therefore
\begin{eqnarray*}
u_{1}(y)&\ge &\frac{G(a)u_{1}(a)}{G(y)}\,
|y-y_{1}|^{-(b(y_{1})+H(y_{1})-\lambda)/ G'(y_{1})}\,
e^{\mathcal{O}(1)}\\&\geq& \frac{G(a)u_{1}(a)}{|y-y_{1}|\,|G'(y_{1}|}\,
|y-y_{1}|^{-(b(y_{1})+H(y_{1})-\lambda)/ G'(y_{1})}\,
e^{\mathcal{O}(1)}\; .
\end{eqnarray*}
 As we have  $u_{1}(a)>0$ by Theorem \ref{positiv},
the integrability of $u_{1}$ on $[0,y_{1}]$ implies
$$  
1+ \frac{b(y_{1})+H(y_{1})-\lambda}{G'(y_{1})} <1\;,
$$
we finally get
$$
\lambda<b(y_{1})+H(y_{1})\;.
$$
\end{proof}


\begin{thebibliography}{99}

\bibitem{CJL} F.Campillo, M.Joannides, I.Larramendy. Stochastic models
  of the chemostat.  arXiv:1011.5108.



\bibitem{quatres}P.Collet, S.M\'el\'eard, S.Mart\'\i nez, J.San
  Mart\'\i n. Quasi-stationary distributions for structured birth and
 death processes with mutations.  Probab. Theory Related Fields, Volume 151, Issue 1 
(2011), Page 191-231.


\bibitem{CW}K.Crump and W.Young. Some stochastic features of bacterial constant
growth apparatus. Bulletin of Mathematical Biology, \textbf{41} (1979), 53--66.
 
\bibitem{bgm}P. Masci, O. Bernard, F. Grognard, "Continuous selection
  of the fastest growing species in the chemostat", 17th IFAC World
  Congress, Seoul, Korea, 2008.
  
\bibitem{bbgm} P. Masci, F. Grognard, E. Benoit, O. Bernard, "Competitive exclusion in 
the generalized Droop model with N species," submitted to Mathematical Biosciences and 
Engineering, 2009.


\bibitem{dh} D.Dykhuizen and D.Hartl. Selection in chemostats. Microbiol Rev. 1983 
June; 47(2): 150-168.



\bibitem{fl}Flegr,J. (1997): Two distinct types of natural selection
  in turbidostat-like and chemostat-like ecosystems.
  J. theor. Biol. 188: 121-126.


\bibitem{lls} P. De Leenheer, B. Li, and H.L. Smith, Competition in
  the chemostat: Some remarks, Can. Appl. Math. Quar. 11. (2003),
  pp. 229-248

\bibitem{monod} Monod, J.(1950) La technique de culture continue. Ann. Inst. Pasteur 
79 : 390 -410

\bibitem{ns} A.Novick, L.Szilard. Experiments with the Chemostat on
  Spontaneous Mutations of Bacteria. PNAS \textbf{36}, 708-719 (1950).

\bibitem{ns2}Novick A, Szilard L (1950). "Description of the
  Chemostat". Science 112 (2920): 715-6


\bibitem{SW} H.Smith, P.Waltman. \textsl{The Theory of the
  Chemostat}. Cambridge University Press, 1995.
  
  \bibitem{vandoorn}
  E. Van Doorn. Quasi-stationary distributions and convergence to quasi-stationarity of birth-death processes. Adv. in Appl. Probab. 23 no.4, 683--700 (1991).


\end{thebibliography}
\end{document}